\newtheorem{lemma}{Lemma}
\newtheorem{theorem}{Theorem}
\newtheorem{corollary}{Corollary}
\newtheorem{definition}{Definition}
\newtheorem{example}{Example}
\newtheorem{problem}{Problem}
\newtheorem{assumption}{Assumption}
\newtheorem{proposition}{Proposition}
\newcommand{\defeq}{\overset{\mathrm{def}}{=}}
\def\BibTeX{{\rm B\kern-.05em{\sc i\kern-.025em b}\kern-.08em
    T\kern-.1667em\lower.7ex\hbox{E}\kern-.125emX}}
\begin{document}
\setcounter{page}{1}
\title{Fundamental limitations of monotonic tracking systems}
\author{Hamed Taghavian
\thanks{``This work was supported by KTH Royal Institute of Technology.'' }
\thanks{Hamed Taghavian is with the Department of Intelligent Systems, KTH Royal Institute of Technology, 10044 Stockholm, Sweden (e-mail: hamedta@kth.se).}}

\maketitle

\begin{abstract}
We consider the monotonic tracking control problem for continuous-time single-input single-output linear systems using output-feedback linear controllers in this paper. We provide the necessary and sufficient conditions for this problem to be solvable and expose its fundamental limitations: the exact feasible locations of the plant zeros, the minimum controller order possible, and the fastest decay rate achievable for the closed-loop system. The relationship between these bounds is explained by a simple geometric shape for plants with a pair of complex-conjugate zeros.



\end{abstract}

\begin{IEEEkeywords}
Linear control systems, Monotonic tracking, Output-feedback, Overshoots, Undershoots. 
\end{IEEEkeywords}

\section{Introduction}\label{sec:introduction}
\IEEEPARstart{I}nherent limitations of control systems were studied by Bode in 1945 and were further investigated by several authors in the 80s~\cite[\S3]{thebook}. More recently, this line of research has led to several performance bounds that determine what is achievable by a control system and what is not, with regard to noise suppression, disturbance rejection, sensitivity, and transients~\cite{thebook,Diabet,zerocrossing,onovershoot,tac2019,tac2024,TAC,MTNS}. A classical example is Bode's integral theorem, suggesting that sensitivity attenuation cannot be achieved at \emph{all} frequencies, regardless of the controller design. These results provide a benchmark for comparing different controllers and save the community from making futile attempts to beat the inescapable limits of performance.   


\subsection{Monotonic tracking systems}
We study the limits of performance in monotonic tracking control systems. These systems track reference set points monotonically, eliminating both overshoots and undershoots in the closed-loop system response~\cite{peak}. Monotonic tracking is especially important when the optimal set point is close to an economic or safety constraint in a control system \cite{PhS1988}. A classical example is controlling a robotic manipulator whose optimal trajectory is close to obstacles~\cite{DeV1990}. Monotonic tracking systems are used in numerous applications, such as robotics \cite{HST2012}, process control \cite{DeV1990}, aerospace \cite{XBS2018}, and transportation~\cite{ScL2021}.  

Several techniques are available for designing monotonic tracking controllers. For example, state-feedback controllers are used in \cite{ZhZ2019,NTSF2015,DaJ2022} to ensure a monotonic closed-loop system response. Another line of research proposes output-feedback controllers for monotonic tracking~\cite{cdc,TaJ2020,combina,ECC,ScN2014}. 

Although these studies provide useful frameworks for designing monotonic tracking controllers, most of them do not discuss the existence of a solution. Therefore, when one of these methods fails, it is not clear whether the design method is too conservative or if the problem is infeasible in the first place. This emphasizes the importance of studying the fundamental limitations in monotonic tracking systems to reveal the feasibility conditions in these problems. Although such feasibility conditions are derived for state-feedback controllers in \cite{NPG2019} and for output-feedback controllers in \cite{darbha2003}, several open questions remain to be answered~\cite{ZhW2015}. In particular, the relationship between the plant characteristics, the complexity of the controller, and different closed-loop system properties, and how they limit each other in a monotonic tracking system, is still unknown. In this paper, we obtain more general feasibility conditions for monotonic tracking systems using output-feedback controllers that will clarify these relations.

\subsection{The role of non-minimum-phase zeros}
The most general linear time-invariant output-feedback control structure is realized by the following two-parameter controller~\cite[\S5.6]{vidbook}
\begin{equation}\label{eqn:two-par}
    U(s) = \frac{N(s)}{D(s)} R(s) - \frac{F(s)}{G(s)} Y(s),  
\end{equation}
where $U$, $R$, and $Y$ are the plant's input, reference input, and the plant's output, respectively. Therefore, a design objective is achievable by linear controllers if and only if it can be realized by a two-parameter controller as (\ref{eqn:two-par}). This controller can result in any stable closed-loop system as long as its transfer function contains the non-minimum-phase zeros of the plant~\cite[p.106]{vidbook}. Therefore, the non-minimum-phase zeros of the plant pose a fundamental limitation on general linear control system design. These limitations have been acknowledged in different contexts, such as in H-$\infty$ control, where the non-minimum-phase zeros are responsible for the ``push-pop'' effect in the sensitivity function~\cite{zames1984h}.\looseness=-1

In monotonic tracking control, the link between real non-minimum-phase zeros and undershoots in the closed-loop system response is well documented~\cite{HoB2007}. It was first shown in \cite{Vid1986} that it is impossible to obtain a monotonic response if the system has an odd number of real right-half plane zeros, and it was later shown that this effect is present with both odd and even numbers of positive zeros~\cite{Sko2005}. Finally, in \cite{darbha2003}, it was proved that a two-parameter controller (\ref{eqn:two-par}) exists that achieves monotonic tracking if and only if the plant does not have any real non-negative zeros.\looseness=-1



\subsection{Contributions}
In this paper, we show that considering a special family of two-parameter controllers (\ref{eqn:two-par}) with
\begin{align}\label{simple2par}
    N(s)&=K_c \nonumber\\
    D(s)&=G(s) 
\end{align}
is enough to achieve monotonic tracking, where $K_c$ is a static gain. Hence, monotonic tracking is feasible if and only if it is feasible using this simplified class of controllers. In addition, we introduce a transformation of plant zeros that provides a deeper understanding of the fundamental limitations present in monotonic tracking systems than the original plant zeros (Theorem~\ref{thm:feasibility}). In particular, by using this transformation, we determine how fast the closed-loop system can be designed, how low the controller order can be chosen, and how close the plant zeros can get to the positive real axis for the monotonic tracking control problem to be feasible with a given controller order and a closed-loop decay rate not slower than a given value.


Note that the controller order is allowed to get arbitrarily large in \cite{darbha2003} to achieve monotonic tracking. In this paper, we provide the necessary and sufficient conditions of monotonic tracking for any \emph{given} controller order. In addition, we obtain the fastest decay rate achievable for the closed-loop system with a monotonic response. The majority of available studies only focus on the role of \emph{real} zeros on these performance limitations~\cite{zerocrossing,onovershoot}. However, our results can be applied to plants with zeros located anywhere in the complex plane.



\subsection{Organization}
This paper is structured as follows. In Section~\ref{sec:problem_statement} we propose the problem, motivate the assumptions, and derive the preliminary results. 
In Section~\ref{sec:feasibility}, the feasibility of the monotonic tracking control problem is addressed. In Section~\ref{sec:fundamental_limits}, the fundamental limitations of a monotonic tracking system are studied and demonstrated in several examples. Conclusion remarks and a summary of key results are provided in Section~\ref{sec:conclusion}. 

\subsection{Notation}\label{sec:notation}
We use he following notation in this paper: A real vector $z\in\mathbb{R}^m$ sorted in descending order of its elements is denoted by $z^{\downarrow}$. The complex-conjugate of $z\in\mathbb{C}$ is $\bar{z}$, the inverse Laplace transform is denoted by $\mathfrak{L}^{-1}\lbrace .\rbrace$ and $\delta(t)$ denotes the Dirac delta function. For a polynomial $Q(x)$, the coefficient corresponding to $x^{i}$ is denoted by $\left[Q(x)\right]_i$ and we write $Y\succeq 0$ to show that matrix $Y$ is positive semi-definite.

\section{Monotonic tracking control}\label{sec:problem_statement}
\subsection{Problem statement}


Consider the single-input single-output linear time-invariant continuous-time system with the $n_{\rm o}$th order transfer function
\begin{align}\label{eqn:openloop_transfer_function}
    H_{\rm o}(s)&=\frac{B_{\rm o}(s)}{A_{\rm o}(s)}=\frac{K_{\rm o} \prod_{i=1}^{m} (s-z_i)}{\prod_{i=1}^{n_{\rm o}} (s-p^{\rm o}_i)}\nonumber\\
    &=\frac{ b^{\rm o}_0 s^{n_{\rm o}}+b^{\rm o}_1 s^{n_{\rm o}-1}+ \cdots +b^{\rm o}_{n_{\rm o}}}{ s^{n_{\rm o}}+a^{\rm o}_1 s^{n_{\rm o}-1}+ \cdots +a^{\rm o}_{n_{\rm o}}}
\end{align}
connected with the two-parameter controller
\begin{equation}\label{eqn:two-par-restricted}
    U(s) = \frac{K_c}{G(s)} R(s) - \frac{F(s)}{G(s)} Y(s), 
\end{equation}
where
\begin{equation}\label{eqn:F,G}
F(s)=\sum_{k=0}^{n_c} f_k s^{n_c-k},\;G(s)=\sum_{k=0}^{n_c} g_k s^{n_c-k}    
\end{equation}
and $K_c \in \mathbb{R}$ is the static gain of the controller. The closed-loop system has the following transfer function~\cite{ScL2020_IFAC}
\begin{align}\label{eqn:transfer_function}
H(s)&=\frac{B(s)}{A(s)}=\frac{K_c B_{\rm o}(s)}{B_{\rm o}(s)F(s)+A_{\rm o}(s)G(s)}\nonumber\\
&=\frac{ b_0 s^{n}+b_1 s^{n-1}+ \cdots +b_{n}}{ s^{n}+a_1 s^{n-1}+ \cdots +a_{n}}=K \frac{\prod_{i=1}^{m} (s-z_i)}{\prod_{i=1}^{n} (s-p_i)},
\end{align}
where $b_k=0$ for $0\leq k\leq n-m-1$, and 
\begin{align*}
K&=K_c K_{\rm o},   \\ 
n&=n_c+n_{\rm o}.
\end{align*}
The controller (\ref{eqn:two-par-restricted}) is a special case of the two-parameter controller (\ref{eqn:two-par}) in which (\ref{simple2par}) holds. 
This controller decouples the closed-loop zeros from the closed-loop poles, where the former is fixed and equal to the plant zeros. This independence of parameters in the numerator and denominator significantly simplifies the control synthesis for this structure compared to more general two-parameter controllers (\ref{eqn:two-par}). Hence, the simplified two-parameter controller (\ref{eqn:two-par-restricted}) has been used to ensure the transient performance of closed-loop systems in several instances, including monotonic tracking control problems~\cite{onovershoot,LCM,cdc}.

This paper aims to study the \emph{feasibility} of the following problem.

\begin{problem}[Monotonic tracking control problem]\label{prob:MTCP}
    Given the plant (\ref{eqn:openloop_transfer_function}), find the polynomials (\ref{eqn:F,G}) and the static gain $K_c \in \mathbb{R}$ of the controller (\ref{eqn:two-par-restricted}) such that the closed-loop system with transfer function~(\ref{eqn:transfer_function}):
    \begin{itemize}
        \item [(i)] is stable,
        \item [(ii)] has a monotonic step response,
        \item [(iii)] has a unit steady-state gain. 
    \end{itemize}
\end{problem}

Our goal is to identify conditions for the existence of a controller (\ref{eqn:two-par-restricted}) that can solve Problem~\ref{prob:MTCP}. One such condition is given in the following proposition.

\begin{proposition}\label{prop:z>=0}
Problem~\ref{prob:MTCP} does \emph{not} have a solution if the plant (\ref{eqn:openloop_transfer_function}) has real non-negative zeros.
\end{proposition}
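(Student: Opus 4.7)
The plan is to exploit a defining feature of the controller structure~(\ref{eqn:two-par-restricted}): since the numerator of the closed-loop transfer function in~(\ref{eqn:transfer_function}) is $K_c B_{\rm o}(s)$, the closed-loop zeros are \emph{exactly} the plant zeros, regardless of how $F(s)$, $G(s)$, and $K_c$ are chosen. Thus any real non-negative plant zero $z_i \geq 0$ is automatically a non-negative zero of the closed-loop transfer function $H(s)$. I would then derive a contradiction with the three conditions of Problem~\ref{prob:MTCP} by splitting into two cases depending on whether $z_i = 0$ or $z_i > 0$.

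The case $z_i = 0$ is immediate: $H(0) = K_c B_{\rm o}(0)/A(0) = 0$ directly contradicts condition (iii), which requires $H(0) = 1$. For the case $z_i > 0$, I would use the standard Laplace-transform integral identity. Let $y(t)$ denote the closed-loop step response, so that $Y(s) = H(s)/s$. Condition (i) places all poles of $H(s)$ strictly in the open left half plane, so the region of convergence of $Y$ contains $s = z_i$, and
\begin{equation*}
\int_0^\infty y(t)\, e^{-z_i t}\, dt \;=\; Y(z_i) \;=\; \frac{H(z_i)}{z_i} \;=\; 0.
\end{equation*}
It then remains to show that the integrand is non-negative and not identically zero, which contradicts the identity above.

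The main work, and the only subtle step, is to establish that $y(t) \geq 0$ on $[0,\infty)$ with $y \not\equiv 0$. I would argue via the impulse response $h(t) = \dot y(t)$: by condition (ii), $y$ is monotonic, so $h$ has constant sign on $[0,\infty)$; by condition (iii) and the final-value theorem, $\int_0^\infty h(t)\, dt = y(\infty) = 1 > 0$, which pins this sign as non-negative and forces $y$ to be non-decreasing from $y(0^-) = 0$ toward $1$. Hence $y(t) \in [0,1]$ on $[0,\infty)$ with $y \not\equiv 0$ (in fact $y(t) \to 1$), so $\int_0^\infty y(t) e^{-z_i t}\, dt > 0$, yielding the desired contradiction. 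Once this monotonicity-to-non-negativity step is in place, the remainder of the argument is a direct sign comparison and requires no further calculation.
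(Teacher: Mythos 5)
Your proof is correct, but it takes a genuinely different route from the paper. The paper's proof is a one-step appeal to an external result traced to Widder: a system whose impulse response changes sign $k$ times has at most $k$ real zeros to the right of all its poles, so a stable PIR closed loop admits no real non-negative zeros; since the structure~(\ref{eqn:two-par-restricted}) pins the closed-loop zeros to the plant zeros, the claim follows. You instead give a self-contained elementary argument: the case $z_i=0$ is killed by condition (iii) alone, and for $z_i>0$ you evaluate the Laplace transform of the step response at the zero, $\int_0^\infty y(t)e^{-z_i t}\,dt = H(z_i)/z_i = 0$, and contradict this with the non-negativity of $y$, which you correctly extract from monotonicity plus $y(\infty)=1$ (monotone decrease is ruled out since $\int_0^\infty h = 1 > 0$). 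Two small points worth making explicit: the evaluation $H(z_i)=0$ presupposes that $z_i$ is not cancelled by a closed-loop pole, which is guaranteed here because the paper's stability condition is imposed on the full characteristic polynomial $A(s)=B_{\rm o}(s)F(s)+A_{\rm o}(s)G(s)$, so $A(z_i)\neq 0$ when $z_i\geq 0$; and in the biproper case $n=m$ the step response jumps to $y(0^+)=K$ at the origin, but your sign argument still forces $K\geq 0$ so $y\geq 0$ throughout. What your approach buys is transparency and independence from the cited sign-change theorem; what the paper's buys is brevity and a statement that generalizes immediately (the same citation bounds the number of admissible right-sided real zeros by the number of sign changes, not just the PIR case $k=0$).
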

\begin{proof}
This result is a special case of a known property of linear systems, which dates back to \cite{widder1934}: If the impulse response of a linear system changes sign $k$ times, then its transfer function can have at most $k$ real zeros which are located on the right side of all the poles \cite{darbha2003}. Therefore, if the closed-loop transfer function (\ref{eqn:transfer_function}) is stable and has a monotonic step response (a non-negative impulse response), it can at most have $k=0$ non-negative zeros. However, the zeros of the closed-loop system (\ref{eqn:transfer_function}) coincide with the zeros of the plant (\ref{eqn:openloop_transfer_function}). Therefore, if the monotonic tracking control problem has a solution, then the plant (\ref{eqn:openloop_transfer_function}) does not have any real non-negative zeros.
\end{proof}

Proposition~\ref{prop:z>=0} provides a \emph{necessary} condition for the feasibility of Problem~\ref{prob:MTCP}. This condition was also shown to be sufficient for monotonic tracking using \emph{general} two-parameter controllers (\ref{eqn:two-par}) in \cite{darbha2003}. We will show that this condition is also sufficient for the simplified controllers (\ref{eqn:two-par-restricted}) in the next sections. Nevertheless, the condition provided in Proposition~\ref{prop:z>=0} on its own does not provide any insights into the fundamental limitations of monotonic tracking, that is, the feasible range of the key parameters, such as the controller order, the decay rate of the closed-loop system, etc., when solutions exist. Therefore, we obtain a more nuanced condition that generalizes Proposition~\ref{prop:z>=0} and gives new insights into these performance bounds in the next sections (see Theorem~\ref{thm:feasibility}).
 


We make the following assumption throughout the paper to study the feasibility of Problem~\ref{prob:MTCP} in the non-trivial cases.
\begin{assumption}[Plant assumptions]\label{ass:plant}
The plant (\ref{eqn:openloop_transfer_function}) satisfies: 
    \begin{itemize}
    \item[1)] $z_i\not\in [0,+\infty)$ for all $i=1,2,\dots,m$.
    \item[2)] $K_{\rm o}\neq 0$.
\end{itemize}
\end{assumption}
When the first assumption is not met, Ptoblem~\ref{prob:MTCP} is not feasible, according to Proposition~\ref{prop:z>=0}. The second assumption removes the degenerate case $H_{o}(s)\equiv 0$.

\subsection{Problem objectives}
To see how the objectives (i)-(iii) affect the feasibility of Problem~\ref{prob:MTCP}, we study each in a separate section.

\subsubsection{Stability}
A solution to Problem~\ref{prob:MTCP} renders the closed-loop system stable.
The closed-loop system (\ref{eqn:transfer_function}) is stable if and only if the roots of the closed-loop characteristic equation
$$
A(s)=B_{\rm o}(s)F(s)+A_{\rm o}(s)G(s)=0
$$
have negative real parts. This condition also ensures \emph{internal} stability in the control structure in Figure~\ref{fig:control_scheme} when $G(s)$ is stable. Otherwise, when $G(s)$ is unstable, the same condition ensures internal stability in the alternative control structure in Figure~\ref{fig:alt_control_scheme}. Both structures in Figures~\ref{fig:control_scheme} and \ref{fig:alt_control_scheme} are equivalent and implement the controller~(\ref{eqn:two-par-restricted})~\cite{LCM}. 

\begin{figure}[!t]
\centerline{\includegraphics[width=0.5\columnwidth]{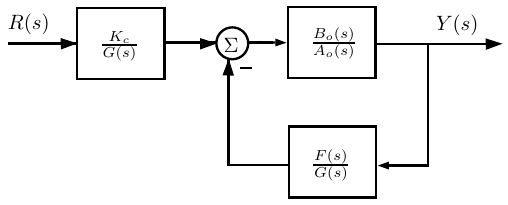}}
\caption{Output feedback control scheme.}
\label{fig:control_scheme}
\end{figure}
\begin{figure}[!t]
\centerline{\includegraphics[width=0.5\columnwidth]{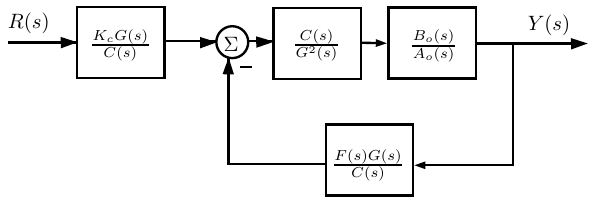}}
\caption{Output feedback control scheme (alternative implementation).}
\label{fig:alt_control_scheme}
\end{figure}

\subsubsection{Monotonic step response}
A solution to Problem~\ref{prob:MTCP} ensures the closed-loop system $H(s)$ has a monotonically increasing step response. For a linear system, this condition is equivalent to having a non-negative impulse response, \emph{i.e.}, being an externally positive system. These systems are defined as follows.

\begin{definition}[\cite{BSFG2017}]\label{def:PIR}
The transfer function $H:\mathbb{C} \to\mathbb{C}$ is externally positive if its inverse Laplace transform satisfies $h(t)=\mathfrak{L}^{-1}\lbrace H(s) \rbrace \geq 0$ for all $t \in [0,+\infty)$.
\end{definition}

A simple necessary condition for a transfer function to be externally positive is to have a non-negative static gain, according to the following proposition.

\begin{proposition}\label{prop:PIRnecK>0}
Let $K\neq 0$. A necessary condition for system (\ref{eqn:transfer_function}) to be externally positive, is
\begin{equation}\label{eqn:Kcl>0}
K=K_cK_{\rm o} > 0.
\end{equation}
\end{proposition}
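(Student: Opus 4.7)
The plan is to examine the behavior of the impulse response $h(t)$ as $t \to 0^+$ and relate it to the leading numerator coefficient $K$. Since the closed-loop transfer function (\ref{eqn:transfer_function}) is proper with relative degree $n-m$ and leading numerator coefficient $K$ (from the factored form $H(s)=K\prod(s-z_i)/\prod(s-p_i)$), polynomial long division yields the asymptotic expansion
\[
H(s) = \frac{K}{s^{n-m}} + O\!\left(\frac{1}{s^{n-m+1}}\right) \quad \text{as } s\to\infty.
\]
Using $\mathfrak{L}^{-1}\{1/s^{k+1}\}=t^{k}/k!$ term by term, this translates into the small-time behavior
\[
h(t) = \frac{K}{(n-m-1)!}\, t^{n-m-1} + o(t^{n-m-1}) \quad \text{as } t\to 0^{+}.
\]
(Equivalently, the initial value theorem and its higher-order generalizations give $h^{(k)}(0^+)=0$ for $k<n-m-1$ and $h^{(n-m-1)}(0^+)=K$; when $n-m=1$ this is just the classical statement $h(0^+)=K$.)

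Now I would invoke the PIR hypothesis $h(t)\geq 0$ for all $t\geq 0$. Dividing the asymptotic formula by $t^{n-m-1}>0$ and letting $t\to 0^+$ yields $K/(n-m-1)! \geq 0$, hence $K\geq 0$. Combined with the standing hypothesis $K\neq 0$, this gives $K>0$, which is exactly (\ref{eqn:Kcl>0}).

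The argument is essentially a one-line asymptotic matching, so there is no real obstacle; the only point requiring a sentence of care is justifying the term-by-term inversion of the asymptotic series at infinity, which is standard for proper rational $H(s)$ and can be made rigorous either by expanding the strictly proper part of $H(s)$ as a finite sum of partial fractions or, equivalently, by iterating the initial value theorem on $s^{n-m-1}H(s) - K/s$.
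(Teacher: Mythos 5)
Your argument for the strictly proper case is correct and is essentially the same as the paper's: the paper's proof of the case $n>m$ iterates the initial value theorem to get $h^{(k)}(0^+)=0$ for $k< n-m-1$ and $h^{(n-m-1)}(0^+)=K$, and then concludes $K\geq 0$ from $h\geq 0$, exactly as you do via the small-time expansion $h(t)=K\,t^{n-m-1}/(n-m-1)!+o(t^{n-m-1})$.

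There is, however, a genuine gap: you have silently assumed $n>m$. The proposition is stated for the general closed-loop transfer function (\ref{eqn:transfer_function}), which at this point in the paper is not yet required to be strictly proper; the standing assumption $n>m$ is only introduced later (Assumption~\ref{ass:controller}), and the proposition is explicitly invoked in the discussion of that assumption precisely to handle solutions with $n=m$ (a biproper plant with a static controller). When $n=m$, your leading term $K/s^{n-m}$ is the constant $K$, whose inverse Laplace transform is the distribution $K\delta(t)$, so the expansion $h(t)\sim K\,t^{n-m-1}/(n-m-1)!$ is meaningless ($(n-m-1)!=(-1)!$) and the ``divide by $t^{n-m-1}$ and let $t\to 0^+$'' step breaks down. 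The paper covers this case separately by writing $h(t)=K\delta(t)+\mathfrak{L}^{-1}\{H(s)-K\}$ with the second term bounded near $t=0$, so that non-negativity of the impulse response (as a measure) forces $K\geq 0$. Adding this one extra case would make your proof complete.
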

\begin{proof}
See Appendix~\ref{sec:Proof_of_prop:PIRnecK>0}.
\end{proof}

The condition presented in Proposition~\ref{prop:PIRnecK>0} is only necessary. Finding conditions that are both necessary and sufficient for a transfer function to be externally positive is not trivial. Characterizing the set of all externally positive systems in the frequency domain or among state-space models is an open problem~\cite{LCM}. These characterizations are only available for first- and second-order systems, which are presented in the next propositions.

\begin{proposition}[\cite{LCM,ScL2020_IFAC}]\label{prop:ex_pos_n=1}
The first-order transfer function (\ref{eqn:transfer_function}) is externally positive if and only if $K>0$ and $B(p^{{\rm cl}\downarrow}_1)\geq 0$.
\end{proposition}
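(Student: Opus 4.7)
The approach is to exploit the simplicity of a first-order transfer function by writing the impulse response in closed form via polynomial long division, and then to read off the non-negativity conditions directly from the resulting expression. With $n=1$ we have $H(s)=B(s)/A(s)$ with $A(s)=s-p_1^{\rm cl\downarrow}$ monic linear (there is only one closed-loop pole, so the sorted ordering is trivial) and $B(s)=b_0 s+b_1$. Dividing $B$ by $A$ gives
$$
H(s)=b_0+\frac{B(p_1^{\rm cl\downarrow})}{s-p_1^{\rm cl\downarrow}},
$$
since the remainder of $B(s)$ modulo $s-p_1^{\rm cl\downarrow}$ is exactly $B(p_1^{\rm cl\downarrow})$. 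Inverting term by term produces the explicit impulse response
$$
h(t)=b_0\,\delta(t)+B(p_1^{\rm cl\downarrow})\,e^{p_1^{\rm cl\downarrow}t}.
$$

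For the necessity direction, suppose $H$ is PIR. Proposition~\ref{prop:PIRnecK>0} immediately gives $K>0$. The Dirac mass at $t=0$ and the continuous exponential on $t>0$ live on disjoint supports and therefore cannot cancel each other, so $h(t)\ge 0$ for all $t\ge 0$ forces $B(p_1^{\rm cl\downarrow})\,e^{p_1^{\rm cl\downarrow}t}\ge 0$ for every $t>0$, which is equivalent to $B(p_1^{\rm cl\downarrow})\ge 0$.

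For the sufficiency direction, assume $K>0$ and $B(p_1^{\rm cl\downarrow})\ge 0$. From (\ref{eqn:transfer_function}) the leading coefficient $b_0$ is either $0$ (when $m<n$) or equals $K$ (when $m=n$), hence $b_0\ge 0$ in either case. Both terms of $h(t)$ are then non-negative, so $h(t)\ge 0$ on $[0,+\infty)$ and $H$ is PIR by Definition~\ref{def:PIR}.

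The only delicate step is the handling of the biproper case $m=n$, where the Dirac component must be accounted for separately; once the decomposition above is in place, everything else is immediate from the sign of a single evaluation $B(p_1^{\rm cl\downarrow})$.
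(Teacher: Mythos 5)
Your argument is correct. Note, however, that the paper does not prove Proposition~\ref{prop:ex_pos_n=1} at all: it is imported by citation from \cite{LCM,ScL2020_IFAC}, so there is no in-paper proof to compare against. Your partial-fraction decomposition $H(s)=b_0+B(p_1^{\rm cl\downarrow})/(s-p_1^{\rm cl\downarrow})$ with $h(t)=b_0\delta(t)+B(p_1^{\rm cl\downarrow})e^{p_1^{\rm cl\downarrow}t}$ is the natural elementary route, and it is consistent with how the paper itself treats impulsive terms (the proof of Proposition~\ref{prop:PIRnecK>0} in Appendix~A splits off a $K\delta(t)$ term in exactly the same way) and with the general machinery of Lemma~\ref{lem:nec&suff}, which for $n=1$ reduces to $Q(\sigma,t)=B(\sigma)t^{m}\geq 0$, i.e.\ precisely your condition $B(p_1^{\rm cl\downarrow})\geq 0$ together with $K>0$. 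Your handling of both the strictly proper case ($b_0=0$) and the biproper case ($b_0=K$) is complete. The only pedantic caveat is the degenerate case $K=0$, where $H\equiv 0$ is trivially PIR yet $K>0$ fails; the proposition is implicitly stated under $K\neq 0$ (as in Proposition~\ref{prop:PIRnecK>0} and Assumptions~\ref{ass:plant} and~\ref{ass:controller}), and your appeal to Proposition~\ref{prop:PIRnecK>0} inherits that hypothesis, so this does not affect the validity of your proof within the paper's framework.
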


\begin{proposition}[\cite{LCM}]\label{prop:ex_pos_n=2}
The second-order transfer function (\ref{eqn:transfer_function}) is externally positive if and only if $p_1,p_2\in\mathbb{R}$, $K>0$, $B(p^{\downarrow}_1),\frac{d}{ds}B(p^{\downarrow}_1)\geq 0$ and $B(p^{\downarrow}_1)\geq B(p^{\downarrow}_2)$, assuming there are no zero-pole cancellations in (\ref{eqn:transfer_function}).
\end{proposition}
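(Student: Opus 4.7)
The plan is to characterize non-negativity of $h(t)=\mathfrak{L}^{-1}\{H(s)\}$ by a partial-fraction analysis, splitting into cases according to the pole configuration. The necessity of $K>0$ is already supplied by Proposition~\ref{prop:PIRnecK>0}, so I can focus on the remaining conditions. First, I would rule out complex-conjugate poles: if $p_1=\alpha+j\omega$, $p_2=\alpha-j\omega$ with $\omega>0$, then expansion yields
\[
h(t) = c_0\delta(t) + e^{\alpha t}\bigl(C_1\cos(\omega t)+C_2\sin(\omega t)\bigr),\qquad t\ge 0,
\]
and the oscillating term, rewritten as $R\,e^{\alpha t}\cos(\omega t-\varphi)$, can be non-negative on $(0,\infty)$ only if $R=0$, i.e.\ $C_1=C_2=0$. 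That forces $B(p_1)=B(p_2)=0$, contradicting the no-pole-zero-cancellation hypothesis. Hence the poles must be real; write $p_1\ge p_2$ with the ordering convention of the proposition.

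For the distinct case $p_1>p_2$ the partial-fraction expansion reads
\[
h(t) = b_0\delta(t) + \frac{B(p_1)}{p_1-p_2}e^{p_1 t} + \frac{B(p_2)}{p_2-p_1}e^{p_2 t}.
\]
Substituting $u=e^{(p_1-p_2)t}\ge 1$ reduces non-negativity on $(0,\infty)$ to $B(p_1)\,u - B(p_2)\ge 0$ for all $u\ge 1$, which is equivalent to $B(p_1)\ge 0$ together with $B(p_1)\ge B(p_2)$ (obtained at $u=1$). I would then verify that the derivative condition $B'(p_1)\ge 0$ is automatic in this subcase: since $K>0$, the polynomial $B$ (when of degree two) is convex with its minimum at the midpoint of its zeros, so $B'(p_1)<0$ would make $B$ strictly decreasing on $(-\infty,p_1]$, giving $B(p_2)>B(p_1)$ and contradicting the ordering condition. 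The lower-degree cases $m\in\{0,1\}$ are handled analogously (with $b_0=0$ and no delta contribution).

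For the repeated case $p_1=p_2=p$ expanding in powers of $(s-p)$ gives
\[
h(t) = \tfrac{1}{2}B''(p)\,\delta(t) + \bigl(B'(p) + B(p)\,t\bigr)\,e^{pt},\qquad t\ge 0.
\]
Non-negativity of the linear factor $B'(p)+B(p)\,t$ on $[0,\infty)$ is equivalent to $B'(p)\ge 0$ (evaluation at $t=0$) together with $B(p)\ge 0$ (so that the factor does not eventually turn negative), while the delta coefficient $\tfrac{1}{2}B''(p)=b_0$ is consistent with $K>0$ when $m=2$ and vanishes otherwise. The ordering condition $B(p_1^{\downarrow})\ge B(p_2^{\downarrow})$ becomes the tautology $B(p)\ge B(p)$, and the derivative condition, redundant in the distinct case, is exactly what is needed here. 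Merging the two subcases yields the uniform list of conditions stated in the proposition.

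The main obstacle is packaging the two subcases into a single uniform characterization: I must check that each listed condition is either genuinely necessary or automatically implied by the others in the configuration where it is not active (derivative in the distinct case; ordering in the repeated case), so that the uniform statement neither over- nor under-constrains the PIR family. Secondary care is needed to justify the partial-fraction representations at the boundary (the no-cancellation hypothesis) and to track the contribution of $b_0\delta(t)$ separately from the exponential part in each subcase.
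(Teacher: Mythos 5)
The paper does not prove this proposition at all --- it is imported verbatim from the cited reference \cite{LCM} --- so there is no internal argument to compare yours against. Your blind derivation is correct and self-contained. The case split is the natural one, and each step checks out: a nonzero oscillatory term $Re^{\alpha t}\cos(\omega t-\varphi)$ indeed cannot stay non-negative, and the no-cancellation hypothesis is exactly what turns $C_1=C_2=0$ into a contradiction; the substitution $u=e^{(p_1-p_2)t}$ reduces the distinct-pole case to an affine inequality on $u\ge 1$ whose two extreme points give precisely $B(p_1^{\downarrow})\ge 0$ and $B(p_1^{\downarrow})\ge B(p_2^{\downarrow})$; and the repeated-pole formula $h(t)=\tfrac12 B''(p)\delta(t)+(B'(p)+B(p)t)e^{pt}$ agrees with the specialization of the paper's Lemma~\ref{lem:nec&suff} to $n=2$, which is a useful consistency check. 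The part of your write-up that does genuine work --- showing $B'(p_1^{\downarrow})\ge 0$ is implied by convexity of $B$ (leading coefficient $K>0$) together with the ordering condition when the poles are distinct, and that the ordering condition is vacuous when they coincide --- is exactly what is needed to justify the single uniform list of conditions, and your convexity argument covers the degenerate degrees $m\in\{0,1\}$ as well. The only cosmetic quibble is that non-negativity of $h$ in the presence of the $b_0\delta(t)$ term should be read in the distributional sense the paper already uses in its proof of Proposition~\ref{prop:PIRnecK>0}; you handle it consistently with that convention.
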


Unfortunately, such general necessary and sufficient conditions do not exist for higher-order systems. Hence, when the system order is not restricted, one needs to impose restrictions on other system parameters to obtain the necessary and sufficient conditions that ensure the system is externally positive. The following lemma does so, by assuming all the poles are real and equal.

\begin{lemma}\label{lem:nec&suff}
Let $n\geq 1$ and $
p_1=p_2=\dots=p_{n}=\sigma
$. System (\ref{eqn:transfer_function}) is externally positive if and only if $K> 0$ and
\begin{equation}\label{eqn:Q(x)}
    Q(\sigma,t)\defeq \sum_{j=0}^{m}\binom{n-1}{j}B^{(j)}(\sigma)t^{m-j}\geq 0
\end{equation}
holds on $t\in(0,+\infty)$.
\end{lemma}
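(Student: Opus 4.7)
The plan is to compute the impulse response $h(t)=\mathfrak{L}^{-1}\{B(s)/(s-\sigma)^n\}$ in closed form and read off the equivalence directly. Necessity of $K>0$ comes for free from Proposition~\ref{prop:PIRnecK>0}, so the real work is translating the sign condition on $h$ into the condition on $Q(\sigma,t)$.

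First I would exploit the fact that with $A(s)=(s-\sigma)^n$ the partial-fraction expansion of $H(s)$ is trivially obtained from the Taylor expansion of $B$ at $\sigma$: since $\deg B=m$,
\begin{equation*}
B(s)=\sum_{j=0}^{m}\frac{B^{(j)}(\sigma)}{j!}(s-\sigma)^{j},\qquad
H(s)=\sum_{j=0}^{m}\frac{B^{(j)}(\sigma)}{j!}\,\frac{1}{(s-\sigma)^{n-j}}.
\end{equation*}
Applying $\mathfrak{L}^{-1}\{(s-\sigma)^{-k}\}=t^{k-1}e^{\sigma t}/(k-1)!$ term by term gives
\begin{equation*}
h(t)=e^{\sigma t}\sum_{j=0}^{m}\frac{B^{(j)}(\sigma)}{j!(n-j-1)!}\,t^{n-j-1}.
\end{equation*}

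Second, I would rewrite the prefactor $1/[j!(n-j-1)!]$ as $\binom{n-1}{j}/(n-1)!$ and factor out the common $t^{n-m-1}$. A short computation then yields the key identity
\begin{equation*}
h(t)=\frac{e^{\sigma t}\,t^{\,n-m-1}}{(n-1)!}\,Q(\sigma,t),\qquad t>0,
\end{equation*}
with $Q(\sigma,t)$ as defined in (\ref{eqn:Q(x)}). Since $e^{\sigma t}\,t^{\,n-m-1}/(n-1)!>0$ for every $t>0$, this identity instantly gives the equivalence $h(t)\ge 0$ on $(0,+\infty)$ iff $Q(\sigma,t)\ge 0$ on $(0,+\infty)$. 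Combining with Proposition~\ref{prop:PIRnecK>0} for the $K>0$ part and invoking continuity of $h$ at $t=0$ (so that $h(0)\ge 0$ follows automatically from $h(0^{+})\ge 0$) closes both directions.

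I do not expect any serious obstacle: the only thing to watch is the bookkeeping that turns $1/[j!(n-j-1)!]$ into the binomial coefficient appearing in $Q$, and a brief check that the formula for $h(t)$ remains valid when the relative degree $n-m$ equals one (so that $t^{n-m-1}=1$) and when $\sigma\ge 0$ (the sign of $e^{\sigma t}$ is irrelevant for the argument). Everything else is immediate from the partial-fraction expansion and Proposition~\ref{prop:PIRnecK>0}.
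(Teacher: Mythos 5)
Your proposal is correct and follows essentially the same route as the paper: both reduce the claim to the identity $h(t)=e^{\sigma t}t^{\,n-1-m}Q(\sigma,t)/(n-1)!$ and handle the $K>0$ part via the initial-value behaviour of $h$ near $t=0$. The only cosmetic difference is that you derive the expansion of $h$ from the Taylor expansion of $B$ at $\sigma$ and term-by-term Laplace inversion, whereas the paper quotes the equivalent closed-form inverse transform from a reference and rearranges it.
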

\begin{proof}
See Appendix~\ref{sec:Proof_of_lem:nec&suff}.
\end{proof}

Lemma~\ref{lem:nec&suff} reduces the problem of verifying a linear system as externally positive to verifying the non-negativity of a univariate polynomial.

\subsubsection{Unit steady-state gain}
A solution to Problem~\ref{prob:MTCP} ensures the closed-loop system (\ref{eqn:transfer_function}) has a unit steady-state gain. This property is required for the closed-loop system to track a step input with zero steady-state error. A unit steady-state gain is achieved by choosing the static gain of the controller $K_c$ as~\cite{LCM}
\begin{equation}\label{eqn:KcDesign}
K_c=\left(B_{\rm o}(0)F(0)+A_{\rm o}(0)G(0)\right)/B_{\rm o}(0),
\end{equation}
according to the final value theorem. The plant must satisfy $B_{\rm o}(0)\neq 0$ for (\ref{eqn:KcDesign}) to exist. This condition is always met by Assumption~\ref{ass:plant}.


\subsection{An equivalent problem}

In this section, we transform Problem~\ref{prob:MTCP} into an equivalent problem whose feasibility is more tractable to analyze. For this transformation to be valid, we need the following set of assumptions throughout the paper.

\begin{assumption}[Controller assumptions]\label{ass:controller}
The controller (\ref{eqn:two-par-restricted}) satisfies: 
\begin{itemize}
    \item[1)] $n_c \geq n_{\rm o}-1$.
    \item[3)] $n_c>m-n_{\rm o}$. 
    \item[2)] $K=K_c K_{\rm o}> 0$.
\end{itemize}
\end{assumption}
The first item ensures the controller has enough freedom to place the closed-loop poles anywhere on the complex plane. The second item in Assumption~\ref{ass:controller} ensures the closed-loop system is strictly proper, \emph{i.e.}, $n>m$. We make this assumption because Problem~\ref{prob:MTCP} is always feasible otherwise, \emph{i.e.}, when $n=m$. To see this, we note that under the assumption $n_c \geq n_{\rm o}-1$, the equality $n=m$ is only possible in the following two cases:

\noindent\textit{Case 1: $n_c=n_{\rm o}=m=0$}. In this case, $A(s)=K_{\rm o}f_0+g_0=1$ and the closed-loop system (\ref{eqn:transfer_function}) is given by $H(s)=K_cK_{\rm o}=K$. Thus, choosing $K_c=1/K_{\rm o}$ solves the monotonic tracking problem.

\noindent\textit{Case 2: $n_c=0,n_{\rm o}=m=1$}. In this case, the closed-loop system is given by
$$
H(s)=\frac{K_cK_o(s-z_1)}{f_0K_o(s-z_1)+g_0(s-p^{\rm o}_1)}=K\frac{s-z_1}{s-p_1},
$$
where
$$
\begin{array}{l}
1=K_{\rm o}f_0+g_0, \\
p_1= z_1f_0K_{\rm o}+p^{\rm o}_1g_0. 
\end{array}
$$
To make sure $H(s)$ is stable and has a monotonically increasing step response it is required that~\cite{ScL2020_IFAC}
$$
z_1<p_1<0.
$$
According to Assumption~\ref{ass:plant} we have $z_1<0$. Hence, the monotonic tracking problem is solved by choosing $g_0$, $f_0$ and $K_c$ such that
\begin{align*}
    &g_0(p^{\rm o}_1-z_1)\in(0,-z_1), \\
    &f_0=(1-g_0)/K_{\rm o},\\
    &K_c=p_1/(z_1K_{\rm o}).
\end{align*}
Therefore, the second item in Assumption~\ref{ass:controller} is not restrictive. The last item in Assumption~\ref{ass:controller}, is not restrictive either because one may always choose the sign of the controller gain $K_c$ such that $K=K_cK_{\rm o}>0$. This inequality is satisfied for all the solutions to problem~\ref{prob:MTCP} when they exist. To see this, note that any solution to Problem~\ref{prob:MTCP} renders the closed-loop system externally positive, which entails $K\geq 0$ according to Proposition~\ref{prop:PIRnecK>0}. Since the closed-loop system is also stable with a unit steady-state gain, $K_c\neq 0$ holds in (\ref{eqn:KcDesign}). As $K_{\rm o}\neq 0$ holds by Assumption~\ref{ass:plant}, one has $K=K_cK_{\rm o}\neq 0$.



Now, consider the following problem.

\begin{problem}\label{prob:monotonic_stable}
Given the plant zeros $z\in\mathbb{C}^m$, find the closed-loop poles $p\in\mathbb{C}^{n}$ such that the closed-loop system $H(s)$ is stable and externally positive. 
\end{problem}

As the next lemma shows, the above problem is equivalent to Problem~\ref{prob:MTCP}.


\begin{lemma}\label{lem:equivalence}
Problem~\ref{prob:MTCP} is feasible, if and only if Problem~\ref{prob:monotonic_stable} is feasible. 
\end{lemma}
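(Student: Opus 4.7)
The plan is to prove the two implications of Lemma~\ref{lem:equivalence} separately, using the closed-loop identity (\ref{eqn:transfer_function}) together with the equivalence between ``monotonic step response'' and ``non-negative impulse response'' (PIR), as noted just after Definition~\ref{def:PIR}. The key observation is that (\ref{eqn:transfer_function}) forces the closed-loop numerator to be a positive scalar multiple of $B_{\rm o}(s)$, while the characteristic polynomial $A(s)=B_{\rm o}(s)F(s)+A_{\rm o}(s)G(s)$ is shaped entirely by the controller polynomials $F,G$. Hence the freedom in Problem~\ref{prob:MTCP} is precisely the freedom to place the closed-loop poles, plus one scalar $K_c$ that sets the overall gain.

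\emph{Forward direction.} This is immediate: given a triple $(F,G,K_c)$ that solves Problem~\ref{prob:MTCP}, the closed-loop transfer function $H(s)$ is stable (by (i)) and PIR (by (ii), since a monotonic step response is equivalent to a non-negative impulse response). The vector of poles $p=(p_1,\dots,p_n)$ it produces is therefore a solution to Problem~\ref{prob:monotonic_stable}.

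\emph{Backward direction.} Suppose $p\in\mathbb{C}^n$ solves Problem~\ref{prob:monotonic_stable}. I would construct a controller as follows. First, use the classical pole-placement theorem for the Diophantine equation
\[
B_{\rm o}(s)F(s)+A_{\rm o}(s)G(s)=\prod_{i=1}^{n}(s-p_i),
\]
which, thanks to Assumption~\ref{ass:controller} item~1 ($n_c\geq n_{\rm o}-1$), admits polynomial solutions $F,G$ of degree at most $n_c$ for any right-hand side of degree $n=n_c+n_{\rm o}$. Second, choose $K_c$ via (\ref{eqn:KcDesign}) to enforce the unit steady-state gain $H(0)=1$; this is well-posed because $B_{\rm o}(0)\neq 0$ by Assumption~\ref{ass:plant}. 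With these choices, condition (i) of Problem~\ref{prob:MTCP} holds by stability of $H(s)$, (iii) holds by construction of $K_c$, and (ii) follows from the PIR property of $H(s)$ (PIR is preserved under multiplication by a positive constant, so the renormalization to unit gain does not destroy it, provided the sign is correct).

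The main obstacle is two-fold. The first is the pole-placement argument when $A_{\rm o}$ and $B_{\rm o}$ share a common factor: the Diophantine equation only realizes characteristic polynomials containing that factor, so one must observe that any PIR solution $H(s)$ built from the plant zeros $\{z_i\}$ and the poles $\{p_j\}$ automatically contains the same common factor in the numerator, making the factor appear on both sides of (\ref{eqn:transfer_function}) and preserving solvability after cancellation. The second is verifying that the sign of $K_c$ produced by (\ref{eqn:KcDesign}) is consistent with $K=K_cK_{\rm o}>0$ required for PIR by Proposition~\ref{prop:PIRnecK>0}; this is automatic because a PIR system with unit steady-state gain satisfies $H(0)=1>0$, which together with $K_{\rm o}\neq 0$ (Assumption~\ref{ass:plant}) fixes the sign uniquely.
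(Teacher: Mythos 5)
Your proposal is correct and follows essentially the same route as the paper: the forward direction is immediate, and the backward direction is the standard pole-placement construction (solving the Diophantine/Sylvester equation $B_{\rm o}F+A_{\rm o}G=\prod_i(s-p_i)$, which Assumption~\ref{ass:controller} and coprimality of $B_{\rm o},A_{\rm o}$ make solvable) followed by fixing $K_c$ via (\ref{eqn:KcDesign}). Your two additional remarks---on common factors and on the sign of $K_c$---are sound and correspond to caveats the paper handles implicitly (the cited coprimality proviso and the discussion surrounding Assumption~\ref{ass:controller}), so they do not change the substance of the argument.
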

\begin{proof}
\textit{if part:}
Assume that Problem~\ref{prob:monotonic_stable} is feasible, \emph{i.e.}, there is some $p\in\mathbb{C}^{n}$ such that (\ref{eqn:transfer_function}) is stable and externally positive. Then, by following a standard pole-placement procedure, one can determine the controller polynomials (\ref{eqn:F,G}) that realize the closed-loop system (\ref{eqn:transfer_function}), by solving the linear algebraic equation (see \cite{LCM})
\begin{equation}\label{eqn:M[f,g]=a^cl}
M\begin{bmatrix} f\\g\end{bmatrix}=a,
\end{equation}
where the elements of $M\in\mathbb{R}^{(n_{\rm o}+n_c+1)\times 2(n_c+1)}$ are defined in (\ref{eqn:elements_of_M})
\begin{figure*}[!t]
\begin{equation}\label{eqn:elements_of_M}
[M]_{ij}=\left\lbrace
\begin{array}{lll}
b^{\rm o}_{i-j}, & 1\leq j\leq n_c+1 & j\leq i\leq j+n_{\rm o} \\
a^{\rm o}_{i-j+n_c+1}, & n_c+2\leq j\leq 2n_c+2 & j-n_c-1\leq i\leq j-n_c-1+n_{\rm o} \\
0, & \textnormal{otherwise} &
\end{array}\right.
\end{equation}
\end{figure*}
and $a\in\mathbb{R}^{n_{\rm o}+n_c+1}$ consists of the desired characteristic equation coefficients given by the identity
\begin{equation}\label{eqn:acl,pcl}
\sum_{k=0}^{n}a_k s^{n-k}=\prod_{i=1}^{n}(s-p_i).
\end{equation}
Equation (\ref{eqn:M[f,g]=a^cl}) has a unique solution when $n_c=n_{\rm o}-1$ and infinitely many solutions when $n_c>n_{\rm o}-1$, provided that $B_{\rm o}(s)$ and $A_{\rm o}(s)$ are relatively prime~\cite[p.180]{GGS:00}. Hence, Assumption~\ref{ass:controller} ensures that the above pole-placement process is well defined for all $a\in\mathbb{R}^{n+1}$, and therefore, for any set of closed-loop poles.

After the controller polynomials (\ref{eqn:F,G}) are obtained, the controller gain $K_c$ is determined as (\ref{eqn:KcDesign}) to ensure a zero stead-state tracking error. Gain (\ref{eqn:KcDesign}) always exists by Assumption~\ref{ass:plant}. Therefore, given a solution to Problem~\ref{prob:monotonic_stable}, one can always construct a solution to Problem~\ref{prob:MTCP}.

\textit{only if part:} Obvious.
\end{proof}

According to Lemma~\ref{lem:equivalence}, the monotonic tracking control problem is feasible, if and only if there is some $p\in\mathbb{C}^{n}$ in the left half plane (for stability) that makes the transfer function (\ref{eqn:transfer_function}) with a fixed set of zeros $z\in\mathbb{C}^{m}$ externally positive. As shown in the next section, a simple condition can verify whether such poles $p\in\mathbb{C}^{n}$ exist or not (Theorem~\ref{thm:feasibility}), even though the conditions on the zeros and poles that make a transfer function externally positive are generally unknown for higher-order systems~\cite{LCM}.




\section{Feasibility}\label{sec:feasibility}

As was shown in Section~\ref{sec:problem_statement}, the monotonic tracking control problem (Problem~\ref{prob:MTCP}) is feasible, if and only if Problem~\ref{prob:monotonic_stable} is feasible. Therefore, we provide the necessary and sufficient conditions that ensure Problem~\ref{prob:monotonic_stable} has a solution in this section.
To study the feasibility of Problem~\ref{prob:monotonic_stable} in the general case, we rely on two core lemmas. The first one shows a stable externally positive transfer function with real poles remains externally positive, after shifting its poles forward.

\begin{lemma}\label{lem:moving_forward}
Let $H(s)$ in (\ref{eqn:transfer_function}) be externally positive, where $p_i\in(-\infty,0)$. Then the transfer function
\begin{equation}\label{eqn:H'_1}
H'(s)= \frac{K\prod_{i=1}^{m} (s-z_i)}{\prod_{i=1}^{n} (s-p_i-\delta_i)}
\end{equation}
is also externally positive, where $\delta_i\geq 0$ for $i=1,2,\dots,n$.
\end{lemma}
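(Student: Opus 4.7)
The plan is to reduce the multi-pole shift to a sequence of single-pole shifts and then handle one shift via a convolution identity. Concretely, I would proceed by induction on the number of indices $i$ for which $\delta_i > 0$. If none, then $H' = H$ and there is nothing to prove. For the inductive step, it suffices to show that shifting a single pole $p_k$ forward by $\delta_k \geq 0$, while leaving the other poles fixed, preserves the PIR property. Chaining $n$ such shifts (each time applying the lemma to the current PIR system, whose poles remain real because we only add real non-negative increments) then yields the general case. It is worth noting that the intermediate systems produced by this induction are not required to be stable, since the definition of PIR only demands $h(t)\geq 0$ on $[0,\infty)$.

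For the single-pole shift, write
\begin{equation*}
H'(s) \;=\; H(s)\cdot\frac{s-p_k}{s-p_k-\delta_k}
\;=\; H(s)\cdot\Bigl(1+\frac{\delta_k}{s-p_k-\delta_k}\Bigr),
\end{equation*}
where the second equality follows from elementary partial-fraction decomposition. Taking the inverse Laplace transform term by term gives the kernel
\begin{equation*}
g(t) \;=\; \delta(t) + \delta_k\, e^{(p_k+\delta_k)t},\qquad t\geq 0,
\end{equation*}
so that $h'(t) = (h*g)(t) = h(t) + \delta_k \int_{0}^{t} h(t-\tau)\, e^{(p_k+\delta_k)\tau}\, d\tau$. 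Since $H$ is PIR we have $h(t)\geq 0$ on $[0,\infty)$, and $\delta_k\geq 0$ together with the positivity of the exponential makes the integrand non-negative, so $h'(t)\geq 0$ for all $t\geq 0$, which is exactly the PIR property for $H'$.

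The only subtlety I anticipate is notational: one must verify that the factorization/convolution manipulation above is well defined (equivalently, that $p_k$ is not cancelled by a zero, or if it is, that the argument is trivially restated in terms of the reduced transfer function with fewer poles). Since the lemma does not preclude pole-zero cancellations in $H$, the cleanest formulation is to write $H = K\prod_i(s-z_i)/\prod_i(s-p_i)$ as a formal rational function and work with the inverse Laplace transform of the resulting multiplicatively perturbed rational function directly; the convolution identity holds in this formal sense regardless. Aside from this bookkeeping, the proof is a direct computation, so I do not foresee a substantive obstacle.
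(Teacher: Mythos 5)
Your proof is correct and follows essentially the same route as the paper: both factor $H'(s)$ as $H(s)$ times a product of first-order factors $(s-p_i)/(s-p_i-\delta_i)$ and conclude by closure of the PIR property under series connection; the paper cites its Proposition~3 to see each factor is PIR, while you verify the same fact explicitly via the kernel $\delta(t)+\delta_k e^{(p_k+\delta_k)t}$.
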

\begin{proof}
See Appendix~\ref{sec:Proof_of_lem:moving_forward}.
\end{proof}

The next lemma indicates that when Problem~\ref{prob:monotonic_stable} is feasible, it can always be solved by choosing all the closed-loop poles real and equal, \emph{i.e.},
\begin{equation}\label{eqn:p1=...=pn}
    p_1=p_2=\dots=p_{n}=\sigma.
\end{equation}
Imposing the condition (\ref{eqn:p1=...=pn}) on closed-loop system poles significantly simplifies Problem~\ref{prob:monotonic_stable}, by decreasing the number of design variables from $n$ to $1$. 

\begin{lemma}\label{lem:reduction_to_real_equidistanced}
Problem~\ref{prob:monotonic_stable} is feasible if and only if there is some $\sigma<0$ such that substituting (\ref{eqn:p1=...=pn}) in (\ref{eqn:transfer_function}) makes $H(s)$ externally positive.
\end{lemma}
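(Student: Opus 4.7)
The plan is to treat the two directions separately. The \emph{if} direction is immediate: if there exists $\sigma<0$ such that (\ref{eqn:transfer_function}) with $p_1=\cdots=p_n=\sigma$ is stable and PIR, that pole assignment is itself a feasible point of Problem~\ref{prob:monotonic_stable}.

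For the \emph{only if} direction, I would assume a pole vector $p\in\mathbb{C}^n$ in the open left half plane that makes $H(s)$ stable and PIR, and exhibit a suitable $\sigma$ directly. Specifically, I would pick any real $\sigma$ in the interval $[\max_i\mathrm{Re}(p_i),\,0)$, which is non-empty because every $p_i$ lies strictly in the open left half plane. The goal is then to show that $H_\sigma(s)\defeq K B(s)/(s-\sigma)^n$, which is exactly (\ref{eqn:transfer_function}) under the substitution (\ref{eqn:p1=...=pn}) with all poles at $\sigma$, is PIR; stability follows automatically from $\sigma<0$.

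The key identity driving the argument is
\[
H_\sigma(s)=H(s)\cdot\frac{A(s)}{(s-\sigma)^n},\qquad A(s)=\prod_{i=1}^{n}(s-p_i).
\]
I would decompose the ``correction'' factor $A(s)/(s-\sigma)^n$ into one-factor pieces $(s-p_i)/(s-\sigma)$ for the real poles $p_i$ and two-factor pieces $[(s-a)^2+b^2]/(s-\sigma)^2$ for the complex conjugate pairs $a\pm ib$. A short calculation rewrites each piece as $1$ plus a partial fraction in powers of $1/(s-\sigma)$ with \emph{non-negative} coefficients---namely $\sigma-p_i$ for a real pole, and $2(\sigma-a)$ together with $(\sigma-a)^2+b^2$ for a complex pair---precisely because $\sigma$ was chosen so that $\sigma\geq\mathrm{Re}(p_i)$ for every $i$. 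Consequently, each piece's inverse Laplace transform is a Dirac mass at $0$ plus a non-negative continuous function on $(0,\infty)$. Since products in the $s$-domain correspond to convolutions in the time domain, and convolutions of non-negative Radon measures stay non-negative, the full correction $A(s)/(s-\sigma)^n$ has a non-negative impulse response.

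Finally, writing $h_\sigma=h\ast g$ where $g\geq 0$ is this correction and $h\geq 0$ by the PIR hypothesis on $H$, I obtain $h_\sigma\geq 0$, so $H_\sigma$ is PIR, as required. I do not anticipate a serious obstacle: the one point requiring a little care is justifying the convolution of distributions that contain Dirac impulses, but this is routine. The central insight is simply that choosing $\sigma$ to the right of every $\mathrm{Re}(p_i)$ (yet still negative) makes the pole-collapsing correction factor a PIR object in its own right, so it preserves PIR under multiplication with the original system.
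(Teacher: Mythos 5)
Your proof is correct and follows essentially the same strategy as the paper: express the collapsed-pole system as the original PIR system multiplied by a correction factor $A(s)/(s-\sigma)^n$, show that this correction is itself PIR, and conclude via the fact that a product of PIR transfer functions (a convolution of non-negative measures in time) is PIR. The only difference is cosmetic: the paper performs the collapse in two stages, first projecting each complex pole onto its real part using the second-order characterization of Proposition~\ref{prop:ex_pos_n=2} and then shifting all real poles to their maximum via Lemma~\ref{lem:moving_forward}, whereas you do it in one stage by expanding each factor in powers of $1/(s-\sigma)$ and checking that the coefficients $\sigma-p_i$, $2(\sigma-a)$, and $(\sigma-a)^2+b^2$ are non-negative.
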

\begin{proof}
See Appendix~\ref{sec:Proof_of_lem:reduction_to_real_equidistanced}.
\end{proof}

According to Lemma~\ref{lem:reduction_to_real_equidistanced}, the poles in Problem~\ref{prob:monotonic_stable} can be restricted to be real and equal without loss of feasibility. Recall that all the externally positive transfer functions with real-equal poles are characterized by Lemma~\ref{lem:nec&suff}. This allows us to determine the feasibility of Problem~\ref{prob:monotonic_stable} in the general case $p\in\mathbb{C}^n$ and $z\in\mathbb{C}^m$, by using Lemmas~\ref{lem:reduction_to_real_equidistanced}~and~\ref{lem:nec&suff}.

For a convenient exposition of our result, we define the transformed numerator polynomial as follows
\begin{equation}\label{eqn:Btild}
    \tilde{B}(s):=\sum_{i=1}^n \tilde{b}_i s^{n-i},
\end{equation}
where
\begin{equation}\label{eqn:btild=b/(i-1)!}
    \tilde{b}_i=b_i/(i-1)!, \quad i=1,2,\dots,n
\end{equation}
The following theorem determines whether a set of poles $p\in\mathbb{C}^n$ exists that makes a transfer function (\ref{eqn:transfer_function}) with prescribed zeros $z\in\mathbb{C}^m$ externally positive. This result provides the necessary and sufficient condition for the feasibility of Problem~\ref{prob:monotonic_stable} (and~\ref{prob:MTCP}).

\begin{theorem}\label{thm:feasibility}
Problem~\ref{prob:monotonic_stable} is feasible if and only if the polynomial (\ref{eqn:Btild}) does \emph{not} have any real non-negative roots.
\end{theorem}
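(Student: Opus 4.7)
The plan is to combine Lemmas~\ref{lem:reduction_to_real_equidistanced} and~\ref{lem:nec&suff} to reduce feasibility to the existence of some $\sigma<0$ with $Q(\sigma,t)\geq 0$ on $t>0$, and then to re-express this condition through a one-parameter family of polynomials that specialises to $\tilde B$ at $\sigma=0$. The workhorse identity, obtained by the Taylor expansion of $B(s)$ around $s=\sigma$ followed by term-by-term inverse Laplace transform of $B(s)/(s-\sigma)^n$, is
\[
h(t;\sigma) \;=\; \frac{e^{\sigma t}\,t^{\,n-m-1}}{(n-1)!}\,Q(\sigma,t) \;=\; e^{\sigma t}\,t^{\,n-1}\,\tilde B_{\sigma}(1/t),
\]
where I introduce the ``shifted tilde polynomial''
\[
\tilde B_{\sigma}(\tau) \;:=\; \sum_{k=0}^{m}\frac{B^{(k)}(\sigma)}{k!\,(n-k-1)!}\,\tau^{k}.
\]
Three features will be used repeatedly: $\tilde B_{0}=\tilde B$; $\tilde B_{\sigma}$ is jointly continuous in $(\sigma,\tau)$; and its leading coefficient $K/(n-m-1)!>0$ is independent of $\sigma$. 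Via the identity, PIR with all poles at $\sigma$ is equivalent to $\tilde B_{\sigma}\geq 0$ on $[0,+\infty)$, so the theorem reduces to asking when such a $\sigma<0$ exists.

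For sufficiency, I plan to assume that $\tilde B$ has no real non-negative root. Combined with $\tilde B(0)=b_{n}/(n-1)!\neq 0$ (from Assumption~\ref{ass:plant}, since no plant zero sits at the origin) and the positive leading coefficient, this yields $\tilde B>0$ strictly on $[0,+\infty)$. A routine continuity-and-dominance argument then transfers this strict positivity to $\tilde B_{\sigma}$ for $\sigma<0$ sufficiently close to $0$: joint continuity keeps $\tilde B_{\sigma}$ bounded below on any compact $[0,T]$, while the $\sigma$-independent leading term dominates uniformly on $[T,+\infty)$ when $T$ is chosen large. Choosing any such $\sigma<0$ produces a stable PIR closed loop and establishes feasibility.

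For necessity, I would start from a feasible $\sigma^{*}<0$ with non-negative impulse response $h^{*}$, and shift all $n$ poles forward to $0$ through the algebraic identity
\[
\frac{B(s)}{s^{n}}\;=\;\frac{B(s)}{(s-\sigma^{*})^{n}}\Bigl(1-\frac{\sigma^{*}}{s}\Bigr)^{\!n}\;=\;H^{*}(s)+\sum_{k=1}^{n}\binom{n}{k}(-\sigma^{*})^{k}\,\frac{H^{*}(s)}{s^{k}},
\]
which, back in the time domain, becomes $h'(t)=h^{*}(t)+\sum_{k=1}^{n}\binom{n}{k}(-\sigma^{*})^{k}\,(h^{*}*t^{k-1}/(k-1)!)(t)$. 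Every summand is non-negative, and the $k=1$ term equals $n(-\sigma^{*})\int_{0}^{t}h^{*}(\tau)\,d\tau$. Since the local behaviour $h^{*}(\tau)\sim K\tau^{n-m-1}/(n-m-1)!$ as $\tau\to 0^{+}$ makes $h^{*}$ strictly positive on a right neighbourhood of $0$, this integral is strictly positive for every $t>0$, so $h'(t)>0$ on $(0,+\infty)$. Using $h'(t)=t^{n-1}\tilde B(1/t)$, this forces $\tilde B(\tau)>0$ on $(0,+\infty)$; combining with $\tilde B(0)\neq 0$ and continuity gives $\tilde B>0$ on $[0,+\infty)$, ruling out any non-negative real root.

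The main obstacle will be the strict-positivity upgrade in the necessity step. A bare appeal to Lemma~\ref{lem:moving_forward} alone only delivers $h'\geq 0$, equivalently $\tilde B\geq 0$ on $[0,+\infty)$, which remains compatible with, say, a positive root of $\tilde B$ of even multiplicity. The explicit convolution decomposition above, together with the known local behaviour of $h^{*}$ near the origin, is precisely what promotes sign-preservation into the exclusion of every non-negative real root of $\tilde B$; the boundary point $\tau=0$ is then handled cheaply by Assumption~\ref{ass:plant}.
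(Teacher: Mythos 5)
Your proposal is correct, and while the sufficiency half follows essentially the paper's route, your necessity argument is genuinely different and worth contrasting. Both proofs first invoke Lemmas~\ref{lem:reduction_to_real_equidistanced} and~\ref{lem:nec&suff} to reduce feasibility to finding $\sigma<0$ with $Q(\sigma,\cdot)\geq 0$, and both identify $Q(0,t)$ with $\tilde B(1/t)$ up to a positive factor. For sufficiency, your compactness-plus-leading-term-dominance argument in the variable $\tau=1/t$ (exploiting that the top coefficient $K/(n-m-1)!$ of $\tilde B_\sigma$ is $\sigma$-independent) is a cleaner packaging of the paper's explicit construction of a uniform radius $\delta^{\star}$ via the coefficient bounds (\ref{eqn:deltabound}); the content is the same. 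For necessity, the paper splits into three cases ($b_n\leq 0$, a strictly negative value of $Q(0,\cdot)$, and a touching zero of $Q(0,\cdot)$), and the hardest case --- a non-negative root of $\tilde B$ of even multiplicity --- requires the explicit computation (\ref{eqn:thm_proof1}) showing $\frac{d}{d\sigma}Q(\sigma,t^{\star})|_{\sigma=0}$ equals a positive integral, followed by appeals to Lemmas~\ref{lem:moving_forward} and~\ref{lem:reduction_to_real_equidistanced}. Your binomial identity
\[
\frac{B(s)}{s^{n}}=H^{*}(s)+\sum_{k=1}^{n}\binom{n}{k}(-\sigma^{*})^{k}\,\frac{H^{*}(s)}{s^{k}},
\]
together with the strict positivity of $\int_{0}^{t}h^{*}$ coming from the local behaviour $h^{*}(\tau)\sim K\tau^{n-m-1}/(n-m-1)!$, delivers strict positivity of $\tilde B$ on $(0,+\infty)$ in a single stroke, subsuming all three of the paper's cases and in particular dispensing with the derivative computation; the boundary point is then settled by $b_{n}=K\prod_{i}(-z_{i})\neq 0$ under Assumptions~\ref{ass:plant} and~\ref{ass:controller}. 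I verified the identity $h'(t)=\sum_{i\geq 1}\tilde b_{i}t^{i-1}=t^{n-1}\tilde B(1/t)$ and the non-negativity of every convolution term, so the argument is sound; what your approach buys is a shorter, more conceptual necessity proof that makes the strictness of the positivity transparent, at the cost of introducing the auxiliary transfer function $B(s)/s^{n}$ whose poles sit on the imaginary axis and whose inverse transform must be interpreted as the polynomial $\sum_{i}b_{i}t^{i-1}/(i-1)!$.
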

\begin{proof}
Since $K>0$ holds by Assumption~\ref{ass:controller}, the limit
$$
\lim_{s\to+\infty} \tilde{B}(s) = \lim_{s\to+\infty}\tilde{b}_{n-m}s^{m}=
\lim_{s\to+\infty}Ks^{m}/(n-m-1)!
$$
is either $+\infty$ or a positive number. Hence, polynomial $\tilde{B}(s)$ does not have any real non-negative roots if and only if
\begin{equation}\label{eqn:Btild>0}
    \tilde{B}(s)>0, \quad s\in [0,+\infty).
\end{equation}
For $s=0$, inequality (\ref{eqn:Btild>0}) is equivalent to
$b_n > 0$. Otherwise for $s\in(0,+\infty)$, we may change the variables as $t=1/s$ and write
\begin{align*}
    \tilde{B}(s)&=t^{1-n}\sum_{i=n-m}^{n}\tilde{b}_i t^{i-1} \\
    &=t^{-m}\sum_{i=0}^{m} b_{n-i}t^{m-i}/(n-i-1)! \\
    &=\frac{t^{-m}}{(n-1)!}\sum_{i=0}^{m} \binom{n-1}{i} i!b_{n-i}t^{m-i} \\
    &=\frac{t^{-m}}{(n-1)!}\sum_{i=0}^{m} \binom{n-1}{i} B^{(i)}(0) t^{m-i} \\
    &=\frac{t^{-m}}{(n-1)!}Q(0,t),
\end{align*}
which implies that condition (\ref{eqn:Btild>0}) is equivalent to
\begin{equation}\label{eqn:Q(0)>0}
b_n>0 \wedge Q(0,t)> 0,\quad t\in(0,+\infty).   
\end{equation} 
Therefore, we continue the proof by showing that Problem~\ref{prob:monotonic_stable} is feasible if and only if (\ref{eqn:Q(0)>0}) holds.

\textit{if part:}
Let (\ref{eqn:Q(0)>0}) hold. Since for every $t\in(0,\infty)$, function $Q(\sigma,t)$ is continuous in $\sigma$, there exists some neighbourhood around the origin ($\sigma=0$) with radius $\delta(t)>0$ such that
\begin{equation}\label{eqn:Qcontinuous}
\vert\sigma\vert<\delta(t) \Rightarrow \vert Q(\sigma,t)-Q(0,t)\vert<\epsilon(t),
\end{equation}
where $\epsilon(t)=Q(0,t)$. If $\delta(t)$ is bounded away from the origin, \emph{i.e.}, if there is some $\delta^{\star}>0$ such that
\begin{equation}\label{eqn:delta>deltastar}
    \delta(t)\geq \delta^{\star}, \quad
    t\in(0,+\infty)
\end{equation}
then relation (\ref{eqn:Qcontinuous}) will imply
$$
\sigma \in (-\delta^{\star},0) \Rightarrow Q(\sigma,t)>0 ,\quad t\in(0,\infty)
$$
which indicates that placing all the poles at $p_1=p_2=\dots=p_{n}=\sigma$ makes the closed-loop system stable and externally positive according to Lemma~\ref{lem:nec&suff} and thereby, Problem~\ref{prob:monotonic_stable} is feasible. Hence, it remains to prove (\ref{eqn:delta>deltastar}).
By collecting the monomials in (\ref{eqn:Q(x)}) with respect to $\sigma$, we may write
$$
Q(\sigma,t)=\sum_{i=n-m}^n q_i(t)\sigma^{n-i},
$$
where the coefficient functions $q_i(t)$ are given by
\begin{equation}\label{eqn:q_i}
    q_i(t)=\sum_{j=0}^{i-n+m}\binom{n-i+j}{j}\binom{n-1}{j}j!b_{i-j}t^{m-j}.
\end{equation}
Assume $\vert \sigma\vert<\delta(t)<1$. Then for every $t\in(0,\infty)$,
$$
\left\vert\frac{d}{d\sigma} Q(\sigma,t)\right\vert \leq
m \sum_{i=n-m}^{n-1} \vert q_i(t)\vert
$$
holds, which implies
$$
\vert Q(\sigma,t)-Q(0,t)\vert \leq
m \delta(t) \sum_{i=n-m}^{n-1} \vert q_i(t)\vert.
$$
Therefore, choosing $\delta(t)$ such that
\begin{equation}\label{eqn:deltabound}
    0<\delta(t)<\min\left\lbrace 1,
    \frac{\epsilon(t)}{m\sum_{i=n-m}^{n-1} \vert q_i(t)\vert}\right\rbrace
\end{equation}
holds for all $t\in(0,+\infty)$ will satisfy (\ref{eqn:Qcontinuous}). Next, we show that the upper bound in (\ref{eqn:deltabound}) does not vanish. Since the function $\epsilon(t)=Q(0,t)$ is a polynomial in $t$ and
$$
Q(0,0)= K(n-1)!/(n-1-m)!>0,
$$
it has a positive lower bound in the range $t\in(0,\infty)$. Also, the denominator in the right side of (\ref{eqn:deltabound}) is bounded for finite $t>0$ and when $t\to +\infty$, we have
\begin{align*}
    \lim_{t\to +\infty}
\frac{\epsilon(t)}{m \sum_{i=n-m}^n \vert q_i(t)\vert}&=\lim_{t\to +\infty}\frac{Q(0,t)}{m\sum_{i=n-m}^n \vert b_i\vert t^m}\\
&=\frac{b_n }{m\sum_{i=n-m}^n \vert b_i\vert }\neq 0.
\end{align*}
This proves that the radius $\delta(t)$ in (\ref{eqn:Qcontinuous}) can be chosen with a positive lower bound (\ref{eqn:delta>deltastar}) and concludes the sufficiency proof.

\textit{only if part:} Next, we show that Problem~\ref{prob:monotonic_stable} is not feasible when inequality (\ref{eqn:Q(0)>0}) is violated. This can happen when $b_n\leq 0$, $Q(0,t^{\star})<0$ or $Q(0,t^{\star})=0$ at some point $t=t^{\star}\in(0,+\infty)$. Each case is treated separately.

\noindent\textit{Case 1:} Assume $b_n\leq 0$. We note that $\lim_{t\to +\infty}Q(0,t)=\lim_{t\to +\infty}b_n t^m$. Therefore, if $b_n<0$, then $Q(0,t)$ becomes negative for large enough $t>0$. Otherwise, if $b_n=0$, then $B(0)=0$ holds, and assuming the closed-loop system is stable, we would have $\int_{0}^{+\infty}h(t)dt=H(0)=0$. As assumptions~\ref{ass:plant} and \ref{ass:controller} ensure $H(s)\not\equiv 0$, the closed-loop impulse response $h(t)$ has to be negative at some $t\geq 0$ and $H(s)$ can not be externally positive in this case. This shows that $b_n>0$ is necessary for feasibility of Problem~\ref{prob:monotonic_stable}.

\noindent\textit{Case 2:} Assume $Q(0,t^{\star})<0$ holds for some $t=t^{\star}\in(0,+\infty)$. We note that since function $Q(\sigma,t^{\star})$ is continuous in $\sigma$, there exists some $\delta>0$ such that
\begin{equation}\label{eqn:Qcontinuous2}
\vert\sigma\vert<\delta \Rightarrow \vert Q(\sigma,t^{\star})-Q(0,t^{\star})\vert<\epsilon,
\end{equation}
where $\epsilon=\vert Q(0,t^{\star})\vert>0$. The relation (\ref{eqn:Qcontinuous2}) indicates that choosing $\sigma\in(-\delta,0)$ and placing all the poles at $p_1=p_2=\dots=p_{n}=\sigma$ results in $Q(\sigma,t^{\star})<0$, which means $H(s)$ is not externally positive due to Lemma~\ref{lem:nec&suff}. Choosing $\sigma\in(-\infty,-\delta]$ also leads to a closed-loop system that is not externally positive, because assuming otherwise, the poles could be increased just enough to satisfy
\begin{equation}\label{eqn:p1=p2in-delta,0}
    p_1=p_2=\dots=p_{n}\in(-\delta,0),
\end{equation}
resulting in a closed-loop system that is not externally positive, which is a contradiction to Lemma~\ref{lem:moving_forward}. Therefore, there is no $\sigma<0$ that could make $H(s)$ externally positive with the real-equal poles (\ref{eqn:p1=...=pn}). Recall that Problem~\ref{prob:monotonic_stable} is feasible if and only if it is feasible with a set of real-equal poles (Lemma~\ref{lem:reduction_to_real_equidistanced}). Thus we conclude that Problem~\ref{prob:monotonic_stable} is not feasible with any poles $p\in\mathbb{C}^n$ when $Q(0,t^{\star})<0$.

\noindent\textit{Case 3:} Assume $Q(0,t)\geq 0$ holds for all $t\in(0,+\infty)$ with the equality met at some $t=t^{\star}$. In this case, one may write
\begin{align}\label{eqn:thm_proof1}
    &\frac{d}{d\sigma}Q(\sigma,t^{\star})|_{\sigma=0}=\nonumber\\
    &\sum_{j=0}^{m-1}\binom{n-1}{j}B^{(j+1)}(0){t^{\star}}^{m-j}=\nonumber\\
    &\sum_{j=0}^{m-1}\binom{n-1}{j}(j+1)! b_{n-1-j} {t^{\star}}^{m-j}= \nonumber\\
    &\frac{(n-1)!}{{t^{\star}}^{n-1-m}}\sum_{j=n-m}^{n-1}(n\nonumber-j) \frac{b_j{t^{\star}}^j}{j!}=\nonumber\\
    &\frac{(n-1)!n}{{t^{\star}}^{n-1-m}}\sum_{j=n-m}^{n-1} \frac{b_j {t^{\star}}^j}{j!}-\frac{(n-1)!}{{t^{\star}}^{n-1-m}}\sum_{j=n-m}^{n-1}  \frac{b_j {t^{\star}}^j}{(j-1)!}=\nonumber\\
&\frac{(n-1)!n}{{t^{\star}}^{n-1-m}}\sum_{j=n-m}^{n-1} \frac{b_j {t^{\star}}^j}{j!}
-{t^{\star}}\sum_{j=1}^{m} \binom{n-1}{j} b_{n-j} j! {t^{\star}}^{m-j}=\nonumber\\
&\frac{(n-1)!n}{{t^{\star}}^{n-1-m}}\sum_{j=n-m}^{n-1} \frac{b_j {t^{\star}}^{j}}{j!} -{t^{\star}}\left( Q(0,t^{\star})-b_n {t^{\star}}^{m}\right).
\end{align}
Now, since $Q(0,t^{\star})=0$, it follows from (\ref{eqn:thm_proof1}) that
\begin{align*}
\frac{d}{d\sigma}Q(\sigma,t^{\star})|_{\sigma=0}&=
\frac{(n-1)!n}{{t^{\star}}^{n-1-m}}\sum_{j=n-m}^{n}b_{j} {t^{\star}}^{j}/j! \nonumber\\
&=n{t^{\star}}^{m-n+1}\int_{0}^{t^{\star}} Q(0,\tau)\tau^{n-1-m} d\tau>0,
\end{align*}
where positivity of the right-hand side follows from the assumption $Q(0,t)\geq 0$ for all $t\in(0,+\infty)$. We have shown that $\frac{d}{d\sigma}Q(\sigma,t^{\star}) |_{ \sigma=0}> 0$ always holds in this case, and therefore, there is a small enough perturbation of $\sigma$ that will result in $Q(\sigma,t^{\star})<0$, \emph{i.e.}, there is some $\delta>0$ such that
$$
\sigma\in(-\delta,0) \Rightarrow Q(\sigma,t^{\star})<0,
$$
which results in $H(s)$ not being externally positive due to Lemma~\ref{lem:nec&suff}. Again, similar to the previous case, there is no $\sigma\in(-\infty,0)$ that can make $H(s)$ externally positive with the real-equal poles (\ref{eqn:p1=...=pn}), due to Lemma~\ref{lem:moving_forward}. By invoking Lemma~\ref{lem:reduction_to_real_equidistanced}, we deduce that Problem~\ref{prob:monotonic_stable} is not feasible with any $p\in\mathbb{C}^n$, which proves the necessity of $Q(0,t)>0$ in (\ref{eqn:Q(0)>0}). By combining the three cases above, we conclude that inequality (\ref{eqn:Q(0)>0}) is also necessary for the feasibility of Problem~\ref{prob:monotonic_stable}.
\end{proof}

Theorem~\ref{thm:feasibility} asserts that the feasibility of Problem~\ref{prob:monotonic_stable} is determined by the $m$ roots of polynomial (\ref{eqn:Btild}). These roots depend only on the number of closed-loop poles $n$ and the location of plant zeros $z$ through the numerator coefficients $b$. Since the open-loop poles $p^{\rm o}$ and the static gain $K_{\rm o}$ of the plant do not affect these roots, the feasibility of the monotonic tracking control problem is independent of these parameters.

Instead, the critical parameters affecting feasibility are the chosen controller order $n_c$ through $n=n_c+n_{\rm o}$ and the location of plant zeros $z$ through $b_i$, according to Theorem~\ref{thm:feasibility}. The impacts of these parameters on feasibility are studied separately in the next section. 

\section{Fundamental limitations}\label{sec:fundamental_limits}

\subsection{The location of plant zeros}
We look into how the plant zeros $z$ influence the feasibility of Problem~\ref{prob:monotonic_stable} in a few simple examples.

\begin{example}[Plants with no zero]\label{ex:m=0}
    Assume the plant (\ref{eqn:openloop_transfer_function}) has no zeros, \emph{i.e.}, $m=0$. In this case, we have $b_i=0$ for $i=0,1,\dots,n-1$ and $b_n=K$. Therefore, the polynomial (\ref{eqn:Btild}) is given by
    $$
    \tilde{B}(s)\equiv \tilde{b}_n=K/(n-1)!.
    $$
    Since $\tilde{B}(s)$ does not have any zeros, Problem~\ref{prob:monotonic_stable} is feasible for any controller order, according to Theorem~\ref{thm:feasibility}.
    \noindent 
\hfill$\square$
\end{example}

\begin{example}[Plants with one zero]\label{ex:m=1}
    Assume the plant (\ref{eqn:openloop_transfer_function}) has one zero, \emph{i.e.}, $m=1$. In this case, we have $b_i=0$ for $i=0,1,\dots,n-2$, $b_{n-1}=K$, and $b_{n}=-Kz_1$. Thus, the polynomial (\ref{eqn:Btild}) is given by
    $$
    \tilde{B}(s)=\frac{K}{(n-2)!}s-\frac{Kz_1}{(n-1)!},
    $$
    which has one zero at $s=z_1/(n-1)$. According to Theorem~\ref{thm:feasibility}, Problem~\ref{prob:monotonic_stable} is feasible if and only if this root is negative, \emph{i.e.}, $z_1<0$.
    \noindent 
\hfill$\square$
\end{example}
\begin{example}[Plants with $b_i\geq0$]\label{cor:b>0}
Assume the plant (\ref{eqn:openloop_transfer_function}) has non-negative numerator coefficients as $b_i\geq 0$ for all $i=1,2,\dots,n-1$ and $b_n>0$. In this case, it is obvious from (\ref{eqn:btild=b/(i-1)!}) that all the coefficients of the polynomial $\tilde{B}(s)$ are non-negative. Therefore, $\tilde{B}(s)$ has no positive roots according to Descartes' rule of signs. Since $b_n>0$, it also does not have roots at the origin $s=0$. Therefore, according to Theorem~\ref{thm:feasibility}, Problem~\ref{prob:monotonic_stable} is feasible in this case.
\noindent 
\hfill$\square$
\end{example}

All minimum-phase systems satisfy $b_i\geq 0$ ($i=1,2,\dots,n$). Therefore, Example~\ref{cor:b>0} shows that monotonic tracking is always possible for minimum-phase plants with any controller order. Note that not every system with non-negative numerator coefficients $b_i\geq 0$ is minimum-phase. Inequality $b_i\geq 0$ can be met with zeros located anywhere on the complex plane except the following region~\cite{cow1954}
$$
z\in\lbrace z\vert \, \vert\operatorname{Arg}(z)\vert <\pi/m\rbrace.
$$
The above region shrinks by increasing $m$, but will never include the positive real axis. This result confirms Proposition~\ref{prop:z>=0} in that monotonic tracking is not possible for plants that have zeros on the positive real axis.

Examples~\ref{ex:m=0}-\ref{cor:b>0} specify a few special cases where Problem~\ref{prob:monotonic_stable} is feasible regardless of the controller order. However in the general case, both the location of plant zeros and the controller order play important roles in the feasibility of Problem~\ref{prob:monotonic_stable}, as shown in the next section.

\subsection{Controller order}
The controller order $n_c$ affects the feasibility of Problem~\ref{prob:monotonic_stable} only through the number of closed-loop poles $n=n_c+n_{\rm o}$, where $n_{\rm o}$ is the order of the plant. Hence, for convenience, we investigate the effects of $n$ instead of $n_c$ for that matter. 
The next example shows that the feasibility of Problem~\ref{prob:monotonic_stable} depends on the controller order when the plant has two zeros.



\begin{example}[Plants with two zeros]\label{ex:m=2,different_n}
Assume the plant (\ref{eqn:openloop_transfer_function}) has two zeros, \emph{i.e.}, $m=2$. When the zeros are located in the open left half-plane, all the numerator coefficients $b_i$ are positive, and Problem~\ref{prob:monotonic_stable} is feasible according to Example~\ref{cor:b>0}. When the zeros are real and at least one of them is non-negative, Problem~\ref{prob:monotonic_stable} is infeasible due to Proposition~\ref{prop:z>=0}. Now, assume the zeros
$$
z_{1,2}=u\pm iv
$$
are non-real and located in the closed right half-plane, \emph{i.e.}, $u\geq 0, v\neq 0$. Let $K=1$. The numerator polynomial is $B(s)=s^2-2us+u^2+v^2$, transforming which by (\ref{eqn:Btild}) yields
$$
\tilde{B}(s)=\frac{s^2}{(n-3)!}-\frac{2u s}{(n-2)!}+\frac{u^2+v^2}{(n-1)!}.
$$
The above polynomial does not have any real non-negative roots, if and only if $u/v<\sqrt{n-2}$, or equivalently,
\begin{equation}\label{eqn:Argz>pi/2-..}
    \operatorname{Arg}(z_{1,2})>\frac{\pi}{2}-\arctan(\sqrt{n-2}).
\end{equation}
Inequality (\ref{eqn:Argz>pi/2-..}) clearly shows that increasing the controller order (and hence $n$) expands the region of admissible plant zeros for which Problem~\ref{prob:monotonic_stable} is feasible (see Figure~\ref{fig:cont_order}).
\noindent 
\hfill$\square$
\end{example}

\begin{figure}[!t]
\centerline{\includegraphics[width=0.4\columnwidth]{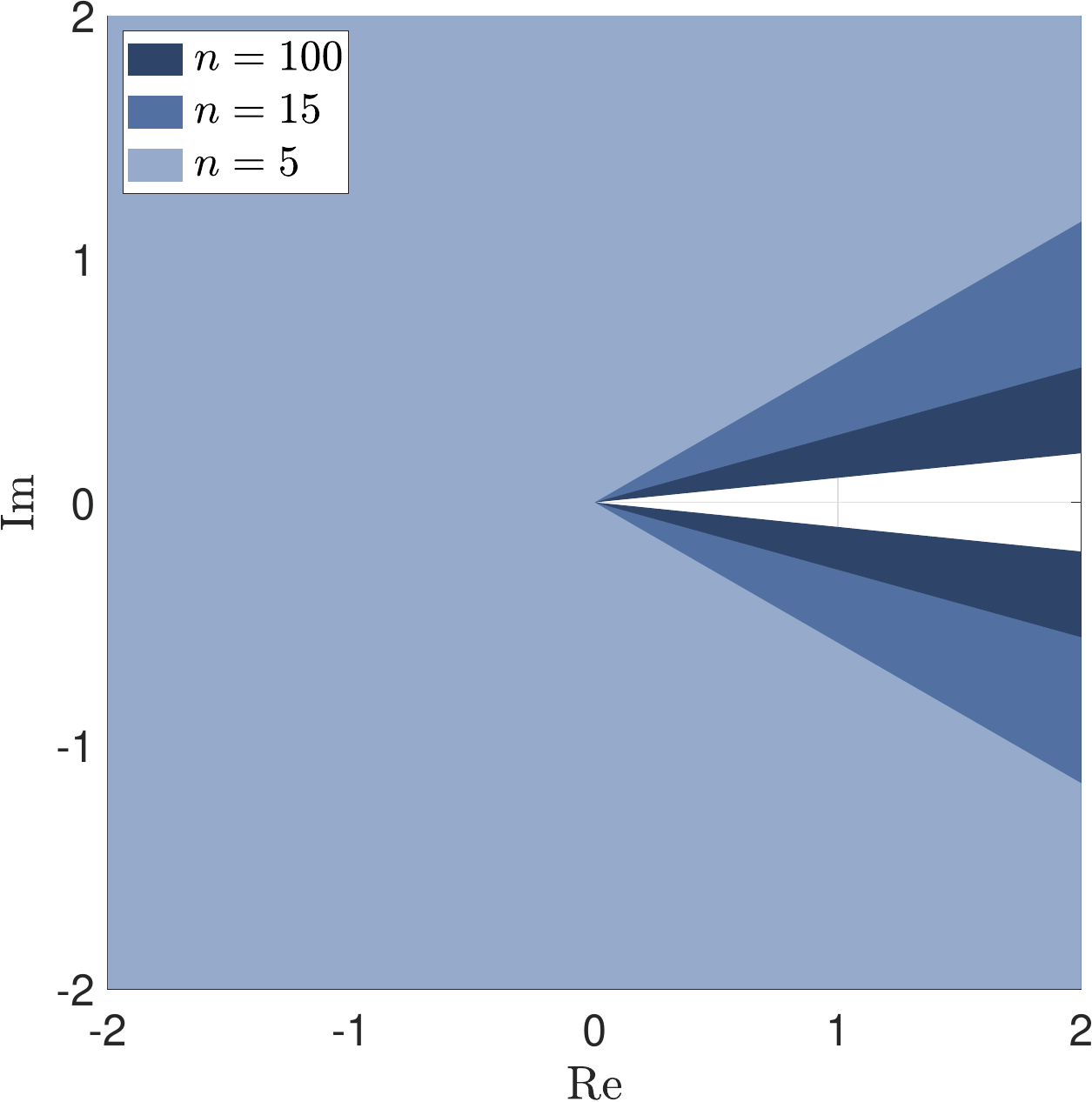}}
\caption{The regions on the complex plane where $z_{1,2}$ satisfy inequality (\ref{eqn:Argz>pi/2-..}) with a fixed $n$. Problem~\ref{prob:monotonic_stable} is feasible using a controller of order $n_c=n-n_{\rm o}$, if and only if the plant zeros are located in these regions, where $n_{\rm o}$ is the order of the plant (fixed). For more information, see Example~\ref{ex:m=2,different_n}.}\label{fig:cont_order}
\end{figure}

Example~\ref{ex:m=2,different_n} shows that Problem~\ref{prob:monotonic_stable} is feasible if and only if the closed-loop system order $n$ is large enough to satisfy (\ref{eqn:Argz>pi/2-..}), when the plant has a pair of complex-conjugate zeros $z_{1,2}$ ($m=2$). The following corollary extends this result to plants with an arbitrary number of real or complex-conjugate zeros ($m\geq 0$) and proves that the necessary condition provided in Proposition~\ref{prop:z>=0} is also sufficient for monotonic tracking.



\begin{corollary}\label{cor:all_feasible}
Problem~\ref{prob:monotonic_stable} is feasible, if and only if the plant~(\ref{eqn:openloop_transfer_function}) does \emph{not} have real non-negative zeros. Furthermore, when Problem~\ref{prob:monotonic_stable} is feasible, it can be solved with the following number of closed-loop poles
\begin{equation}\label{eqn:n^star<all}
    n= m_{\rm Mp}+m_{\rm Np}+\left\lceil\sum\nolimits_{j=1}^{m_{\rm Np}/2} \left({\operatorname{Re}(z_j)}/{\operatorname{Im}(z_j)}\right)^2\right\rceil,
\end{equation}
where $m_{\rm Mp}$ and $m_{\rm Np}$ are the number of plant zeros in the regions $\lbrace z\in\mathbb{C} \vert \operatorname{Re}(z)<0\rbrace$ and $\lbrace z\in\mathbb{C} \vert \operatorname{Re}(z)\geq 0, \operatorname{Im}(z)\neq 0\rbrace$ respectively.
\end{corollary}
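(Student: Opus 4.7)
Necessity follows directly from \textbf{Proposition~\ref{prop:z>=0}}. For sufficiency, by \textbf{Theorem~\ref{thm:feasibility}} it suffices to exhibit an $n$ for which $\tilde B(s)$ has no real non-negative root. I would recast this in the time domain via the identity
\[
h(t) = K\prod_{i=1}^m (D-z_i)\!\left[\frac{t^{n-1}}{(n-1)!}\right] = t^{n-1}\tilde B(1/t),\qquad D := d/dt,
\]
obtained from $\mathfrak{L}^{-1}\{B(s)/s^n\}$ by residue calculation together with coefficient reversal. Consequently, $\tilde B>0$ on $(0,\infty)$ iff $h>0$ on $(0,\infty)$; moreover $\tilde B(0) = K\prod_i(-z_i)/(n-1)!$ is automatically positive, since $K>0$ (\textbf{Assumption~\ref{ass:controller}}) and no $z_i$ lies on $[0,\infty)$.

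\textbf{Key construction.} Factor $\prod_i(D-z_i) = \prod_{i\in\mathrm{Mp}}(D-z_i)\cdot\prod_{j=1}^{m_{\mathrm{Np}}/2}[(D-u_j)^2+v_j^2]$ and apply the minimum-phase piece first. In the basis $\phi_k(t):=t^k/k!$, the identities $(D-z)\phi_k = \phi_{k-1}+|z|\phi_k$ (real $z<0$) and $[(D-a)^2+b^2]\phi_k = \phi_{k-2}+2|a|\phi_{k-1}+(a^2+b^2)\phi_k$ (LHP pair with $a<0$) show that each minimum-phase operator maps non-negative combinations of $\phi_k$'s to non-negative combinations, extending the support downward. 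After all $m_{\mathrm{Mp}}$ such operators are applied to $\phi_{n-1}$ I obtain $f(t)=\sum_{k=n-1-m_{\mathrm{Mp}}}^{n-1} f_k\phi_k(t)$ with $f_k\ge 0$ and $f_{n-1}>0$. Applying the remaining RHP-pair operators and distributing yields $h/K = \sum_k f_k\Psi_k$, where $\Psi_k(t):=\prod_{j=1}^{m_{\mathrm{Np}}/2}[(D-u_j)^2+v_j^2]\phi_k(t)$. By (\ref{eqn:n^star<all}) the smallest index appearing is $k_{\min}=n-1-m_{\mathrm{Mp}}=m_{\mathrm{Np}}+\sum_j\lfloor(u_j/v_j)^2\rfloor$, so the proof reduces to the claim that $\Psi_k\ge 0$ on $[0,\infty)$ whenever $k\ge m_{\mathrm{Np}}+\sum_j\lfloor(u_j/v_j)^2\rfloor$.

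\textbf{Main obstacle.} Proving this claim for multiple RHP pairs is the delicate step. For one pair it is exactly the discriminant computation of \textbf{Example~\ref{ex:m=2,different_n}}, yielding the sharp threshold $k\ge 2+\lfloor(u/v)^2\rfloor$. A naive induction on the number of pairs does not immediately close, because $[(D-u)^2+v^2]$ applied to a generic non-negative function need not produce a non-negative function, so the single-pair argument cannot simply be iterated. I would close the argument by inducting on the number of pairs while maintaining an explicit structural invariant of the form $\Psi_k^{(K)}(t) = \phi_{k-2K}(t)\cdot R_K(k,t)$, where $R_K$ is a polynomial of degree $2K$ in $t$ whose non-negativity on $[0,\infty)$ is witnessed at each inductive step by a discriminant inequality that consumes exactly the increment $2+\lfloor(u_{K+1}/v_{K+1})^2\rfloor$ specified by (\ref{eqn:n^star<all}).
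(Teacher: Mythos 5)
Your reduction is valid as far as it goes: the identity $t^{n-1}\tilde B(1/t)=K\prod_i(D-z_i)\bigl[t^{n-1}/(n-1)!\bigr]$ is correct, the observation that minimum-phase factors act as non-negative combinations in the basis $\phi_k=t^k/k!$ is correct, and so the whole statement is correctly reduced to the positivity of $\Psi_k=\prod_j[(D-u_j)^2+v_j^2]\phi_k$ for $k\geq m_{\rm Np}+\sum_j\lfloor(u_j/v_j)^2\rfloor$. But that residual claim \emph{is} the corollary (for a plant whose zeros are exactly the $m_{\rm Np}/2$ right-half-plane pairs, with all poles collapsed to one location as Lemma~\ref{lem:reduction_to_real_equidistanced} permits), so nothing has been discharged for $m_{\rm Np}>2$. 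The proposed repair --- an induction maintaining $\Psi_k^{(K)}=\phi_{k-2K}R_K(k,t)$ with $R_K\geq0$ certified by ``a discriminant inequality'' --- is not carried out and does not close as described: applying $[(D-u)^2+v^2]$ to $\phi_{k-2K}R_K$ produces cross terms in $R_K'$ and $R_K''$, the resulting cofactor has degree $2K+2$, and there is no discriminant criterion for non-negativity of a polynomial of degree $\geq4$; nor is it clear that whatever sufficient condition one substitutes costs exactly the increment $2+\lfloor(u_{K+1}/v_{K+1})^2\rfloor$ demanded by (\ref{eqn:n^star<all}). You have correctly identified the main obstacle but left it standing.

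The paper avoids this obstacle entirely by \emph{not} forcing all the zero factors to share one pool of poles. It writes $H=KH_{\rm Mp}H_{\rm Np}$, where $H_{\rm Mp}$ carries the $m_{\rm Mp}$ left-half-plane zeros over a stable denominator $A_0$ of degree $m_{\rm Mp}$ (PIR by Example~\ref{cor:b>0}), and each complex non-minimum-phase pair gets its \emph{own} second-order-numerator block $(s-z)(s-\bar z)/A_i(s)$ with $\deg A_i=n_i$ chosen so that $\sqrt{n_i-2}>\operatorname{Re}(z)/\operatorname{Im}(z)$, which is PIR by the single-pair computation of Example~\ref{ex:m=2,different_n}. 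Since a series connection of PIR systems is PIR (convolution of non-negative impulse responses), the product is PIR with $n=m_{\rm Mp}+\sum_i n_i$ poles, and the pairs never interact. In Laplace-domain terms, the paper proves positivity of a \emph{product} of individually non-negative impulse responses, whereas you are trying to prove positivity of the single polynomial obtained after all poles coincide --- a strictly harder statement whose truth one only learns \emph{after} the corollary is established (via Theorem~\ref{thm:feasibility} and Lemma~\ref{lem:reduction_to_real_equidistanced}). If you want to keep your operator formalism, the fix is to allot each RHP pair its own block of $n_i$ poles, prove each block PIR by your single-pair discriminant argument, and then convolve; attempting to verify $\Psi_k\geq0$ directly for several pairs is the wrong battle. (As a side remark, you should also check the bookkeeping: each pair costs $n_i=3+\lfloor(u_j/v_j)^2\rfloor$ poles because the inequality $\sqrt{n_i-2}>u_j/v_j$ is strict, so any accounting must verify that the total matches (\ref{eqn:n^star<all}).)
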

\begin{proof}
See Appendix~\ref{sec:proof_of_cor:all_feasible}.
\end{proof}

\subsubsection{Minimum controller order}
Increasing the controller order relaxes the design problem. In other words, if Problem~\ref{prob:monotonic_stable} is feasible for $n=n'$, it is also feasible for all $n>n'$. 
Corollary~\ref{cor:all_feasible} provides an upper bound (\ref{eqn:n^star<all}) on the minimum number of closed-loop poles $n=n^{\star}$ that can solve Problem~\ref{prob:monotonic_stable} when it is feasible. One can find $n^{\star}$ by a simple bisection search over $n$ and using Theorem~\ref{thm:feasibility}: Start with (\ref{eqn:n^star<all}) and decrease $n$ until the polynomial equation $\tilde{B}(s)=0$ has one or more real non-negative roots. The minimum controller order is then given by $n^{\star}_c=n^{\star}-n_{\rm o}$.


\subsection{Closed-loop decay rate}\label{subsec:decayrate}
In addition to Assumptions~\ref{ass:plant} and \ref{ass:controller}, we assume there are no zero-pole cancellations in the closed-loop system (\ref{eqn:transfer_function}) in this section.
Let us define the \emph{decay rate} of the closed-loop system (\ref{eqn:transfer_function}) as its poles abscissa
\begin{equation}\label{eqn:abscissa}
    \sigma(H)=\max_i\lbrace \operatorname{Re}(p_i)\rbrace,
\end{equation}
which determines the speed of the closed-loop system response and its settling time~\cite[p.2]{vidbook}. The next corollary provides the necessary and sufficient conditions for the feasibility of monotonic tracking with a closed-loop decay rate that is not slower than a given value.
\begin{corollary}\label{cor:alfathm}
    Problem~\ref{prob:monotonic_stable} is feasible with decay rate $\sigma(H)<-\alpha$ if and only if the polynomial
\begin{equation}\label{eqn:B'}
    \tilde{B'}(s)=\sum_{i=1}^n \frac{b'_{i} s^{n-i}}{(i-1)!} 
\end{equation}
    does \emph{not} have any real non-negative roots, where
\begin{align}\label{eqn:b'}
B'(s)&=b'_0s^n+b'_1s^{n-1}+\dots+b'_n \nonumber\\
&:=K\prod_{i=1}^m (s-z_i-\alpha).
\end{align}
\end{corollary}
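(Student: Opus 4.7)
The plan is to reduce the decay-rate-constrained problem to an ordinary instance of Problem~\ref{prob:monotonic_stable} by shifting the complex variable, and then invoke Theorem~\ref{thm:feasibility} on the shifted system. Concretely, I would introduce the shifted transfer function
$$
\tilde{H}(s) \defeq H(s-\alpha) = \frac{K\prod_{i=1}^m (s-(z_i+\alpha))}{\prod_{i=1}^n (s-(p_i+\alpha))},
$$
whose numerator is precisely the polynomial $B'(s)$ defined in (\ref{eqn:b'}). The number of poles $n$ and the leading gain $K$ are preserved, while the zeros move from $z_i$ to $z_i+\alpha$ and the poles from $p_i$ to $p_i+\alpha$.

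The first step is to establish two equivalences under this shift. For the decay-rate constraint, $\sigma(H)<-\alpha$ holds if and only if $\operatorname{Re}(p_i)<-\alpha$ for every $i$, which is the same as $\operatorname{Re}(p_i+\alpha)<0$, \emph{i.e.}, stability of $\tilde{H}$. For PIR, the Laplace shift identity $\mathfrak{L}^{-1}\{H(s-\alpha)\}(t)=e^{\alpha t}h(t)$ together with $e^{\alpha t}>0$ shows that $\tilde{H}$ is PIR if and only if $H$ is PIR. Combining these, the existence of $p\in\mathbb{C}^n$ such that $H$ is PIR with $\sigma(H)<-\alpha$ is equivalent to the existence of $\tilde{p}\in\mathbb{C}^n$ (with $\tilde{p}_i=p_i+\alpha$) such that $\tilde{H}$ is stable and PIR.

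The second step is to apply Theorem~\ref{thm:feasibility} to the shifted system: the latter existence problem is Problem~\ref{prob:monotonic_stable} for zeros $\tilde{z}_i=z_i+\alpha$ and numerator coefficients $b'$, so it is feasible if and only if the transformed polynomial $\tilde{B'}(s)=\sum_{i=1}^n b'_i s^{n-i}/(i-1)!$ has no real non-negative roots. This closes the chain of equivalences and yields the corollary.

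I expect the technical writing to be routine once the shift is introduced; the only subtle point is to confirm that Theorem~\ref{thm:feasibility} can still be invoked when $\tilde{z}_i=z_i+\alpha$ happens to lie on the non-negative real axis, in which case Assumption~\ref{ass:plant} is violated for the shifted system. This case, however, is benign: Proposition~\ref{prop:z>=0} tells us the shifted problem is infeasible, and a direct inspection shows that any such $\tilde{z}_i\geq 0$ forces $\tilde{B'}(s)$ to have a non-negative real root, so both sides of the claimed equivalence fail together. Thus the shift argument and Theorem~\ref{thm:feasibility} deliver the corollary without further work.
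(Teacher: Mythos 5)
Your proposal is correct and follows essentially the same route as the paper: the frequency shift $s\mapsto s+\alpha$, the modulation identity $\mathfrak{L}^{-1}\{H'(s+\alpha)\}=e^{-\alpha t}h'(t)$ showing PIR is preserved, the identification of the decay-rate constraint $\sigma(H)<-\alpha$ with stability of the shifted system, and then an application of Theorem~\ref{thm:feasibility} to the zeros $z_i+\alpha$. Your closing remark on the edge case $z_i+\alpha\in[0,+\infty)$ goes beyond the paper (which does not address it) and reaches the right conclusion, though the claim that $\tilde{B'}$ then has a non-negative real root is not quite a ``direct inspection'' --- it needs a Widder-type sign-change argument (as in Proposition~\ref{prop:z>=0}) or the observation that the sufficiency half of Theorem~\ref{thm:feasibility} never uses Assumption~\ref{ass:plant}(1).
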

\begin{proof}
    See Appendix~\ref{sec:Proof_of_cor:alfathm}.
\end{proof}
Theorem~\ref{thm:feasibility} is the special case of Corollary~\ref{cor:alfathm} with $\alpha=0$. As Lemma~\ref{lem:moving_forward} suggests, if we have one solution to Problem~\ref{prob:monotonic_stable} we can always find another solution with a larger (slower) decay rate, by moving the poles forward. However the converse is not true; There is often a limit on how fast the closed-loop system can be designed while ensuring a monotonic tracking. The fastest decay rate achievable in Problem~\ref{prob:monotonic_stable} can be defined as
\begin{align} \label{eqn:optsigma}
    \begin{array}[c]{rlll}
    \sigma^{\star}(z,n):=&\underset{p\in\mathbb{C}^n}{\text{inf}} & \sigma(H)\\
    &\mbox{subject to} & \operatorname{Re}(p_i)<0, & i=1,2,\dots,n\\
    & & H(s) \textnormal{ is externally positive}. &
    \end{array}
\end{align}
Note that $\sigma^{\star}(z,n)$ may be unbounded even when Problem~\ref{prob:monotonic_stable} is feasible. This is evident for plants without zeros ($m=0$). For these plants, the closed-loop system (\ref{eqn:transfer_function}) is stable and externally positive regardless of where the closed-loop poles are located in $p\in(-\infty,0)^{n}$. Therefore, an arbitrarily small decay rate can be obtained for the closed-loop system with any controller order, by decreasing the poles abscissa (\ref{eqn:abscissa}). However, when the plant $H_{\rm o}(s)$ has real negative zeros, the closed-loop system decay rate cannot be decreased indefinitely. In this case, the fastest decay rate is lower bounded by~\cite{widder1934}
\begin{equation}\label{eqn:ineq_decay}
    \sigma^{\star}(z,n)\geq \max_{z_i\in\mathbb{R}}\lbrace z_i \rbrace.
\end{equation}


\subsubsection{Fastest decay rate over all controller orders}
Increasing the controller order relaxes the design problem. This means that a smaller (faster) decay rate can be obtained by increasing the controller order. Therefore, for a fixed set of plant zeros $z$, the function $\sigma^{\star}(z,n)$ is \emph{decreasing} with respect to $n$ (and the controller order $n_c$).

The next corollary shows that by increasing the controller order, it is possible to meet the equality in (\ref{eqn:ineq_decay}). This corollary also shows that, when the plant $H_{\rm o}(s)$ does \emph{not} have negative zeros, the closed-loop decay rate (\ref{eqn:optsigma}) can be made arbitrarily small by increasing the controller order.


\begin{corollary}\label{cor:decaynozero}
If the plant (\ref{eqn:openloop_transfer_function}) has real negative zeros, the fastest decay rate (\ref{eqn:optsigma}) of the closed-loop system (\ref{eqn:transfer_function}) satisfies
\begin{equation}\label{eqn:sigstar>z}
\min_n\sigma^{\star}(z,n)= \max_{z_i\in\mathbb{R}}\lbrace z_i \rbrace.
\end{equation}
Otherwsie, if the plant does \emph{not} have any negative zeros, then
$\lim_{n\to +\infty}\sigma^{\star}(z,n)=-\infty$.
\end{corollary}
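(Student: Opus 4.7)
The plan is to combine Corollary~\ref{cor:alfathm} with the feasibility criterion of Corollary~\ref{cor:all_feasible} applied to a \emph{shifted} zero configuration. By Corollary~\ref{cor:alfathm}, for a fixed $n$ and any $\alpha\geq 0$, the decay rate $\sigma(H)<-\alpha$ is achievable in Problem~\ref{prob:monotonic_stable} if and only if the transformed polynomial built from the shifted zeros $\{z_i+\alpha\}_{i=1}^m$ has no real non-negative roots; by Theorem~\ref{thm:feasibility} this is exactly the feasibility of Problem~\ref{prob:monotonic_stable} with zero configuration $\{z_i+\alpha\}$ and the same closed-loop order $n$. Thus the question reduces to: for which $\alpha\geq 0$ is the shifted problem feasible for some $n$? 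By Corollary~\ref{cor:all_feasible}, this holds if and only if $\{z_i+\alpha\}$ contains no real non-negative element. Since any non-real $z_i$ keeps its nonzero imaginary part under the real shift, only the real zeros can be pushed onto $[0,+\infty)$ when $\alpha$ is large; this is what drives the dichotomy in the statement.

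For the first case, let $z_{\max}:=\max_{z_i\in\mathbb{R}}\{z_i\}$, which is strictly negative by Assumption~\ref{ass:plant}. Inequality (\ref{eqn:ineq_decay}) already gives $\sigma^{\star}(z,n)\geq z_{\max}$ for every $n$. For the matching upper bound, fix $\epsilon\in(0,-z_{\max})$ and set $\alpha:=-z_{\max}-\epsilon>0$. Every real shifted zero satisfies $z_i+\alpha\leq z_{\max}+\alpha=-\epsilon<0$, and every non-real shifted zero remains non-real, so the shifted plant has no real non-negative zeros. Corollary~\ref{cor:all_feasible} then guarantees feasibility of the shifted problem for all sufficiently large $n$, whence $\sigma^{\star}(z,n)<-\alpha=z_{\max}+\epsilon$ for such $n$. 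Letting $\epsilon\to 0^{+}$ yields $\inf_n\sigma^{\star}(z,n)\leq z_{\max}$, which combined with the lower bound proves (\ref{eqn:sigstar>z}).

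For the second case, the absence of real negative zeros together with Assumption~\ref{ass:plant} means every $z_i$ has nonzero imaginary part. For any $M>0$, choose $\alpha:=M$; then every shifted zero $z_i+\alpha$ remains non-real, so the shifted plant is free of real non-negative zeros. By Corollary~\ref{cor:all_feasible}, the shifted problem is feasible for some $n$, hence $\sigma^{\star}(z,n)<-M$ for that $n$. Since $\sigma^{\star}(z,n)$ is monotone non-increasing in $n$ and $M$ is arbitrary, $\lim_{n\to+\infty}\sigma^{\star}(z,n)=-\infty$.

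I do not anticipate a major obstacle: the heavy lifting has already been done in Corollaries~\ref{cor:alfathm} and \ref{cor:all_feasible}, and the proof amounts to recognising ``faster-than-$-\alpha$ decay'' as feasibility for the $\alpha$-shifted zero set. The only delicate point is correctly tracking the real-versus-complex distinction under shifts, since a sufficiently large forward shift will eventually drag any real negative zero onto $[0,+\infty)$ and block feasibility, whereas complex zeros are immune to this obstruction — exactly the mechanism producing the floor $z_{\max}$ in the first case and its absence in the second.
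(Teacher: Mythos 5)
Your proposal is correct and follows essentially the same route as the paper's own proof: both reduce "decay rate faster than $-\alpha$" to feasibility of Problem~\ref{prob:monotonic_stable} with the shifted zeros $z_i+\alpha$ (via Corollary~\ref{cor:alfathm}) and then apply the feasibility criterion of Corollary~\ref{cor:all_feasible}, splitting into the real-zero and no-real-zero cases. Your version is in fact slightly more explicit than the paper's in the first case, where you pair the upper bound with the lower bound (\ref{eqn:ineq_decay}) and carry out the $\epsilon\to 0^{+}$ limit that the paper leaves implicit.
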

\begin{proof}
    See Appendix~\ref{sec:Proof_of_cor:decaynozero}.
\end{proof}

Corollary~\ref{cor:decaynozero} provides the fastest decay rate over \emph{all} controller orders. For plants with no zero ($m=0$) and plants with one zero ($m=1$), this decay rate can be realized with any controller order. However, this is generally not true when the plant has two or more zeros ($m\geq 2$), as demonstrated by the following example.

\begin{example}[Plants with two zeros]\label{ex:m=2decay}
    Consider again a plant with two zeros $m=2$ as in Example~\ref{ex:m=2,different_n}. When both zeros are real and negative, one can choose any number $n\geq 3$ of closed-loop poles in the range
    \begin{equation}\label{eqn:exm=2:rangeofp}
        p_i\in(\max_{z_i\in\mathbb{R}}\lbrace z_i \rbrace,0), \quad i=1,2,\dots,n
    \end{equation}
    to solve Problem~\ref{prob:monotonic_stable} with the minimum decay rate $\sigma(H)=\max_{z_i\in\mathbb{R}}\lbrace z_i \rbrace$. To show this, we note that the closed-loop transfer function can be written as $H(s)=H'(s)G(s)$ where
\begin{equation}\label{eqn:H'exm=2}
H'(s)=\frac{K(s-z_1)(s-z_2)}{(s-p_1)(s-p_2)}, 
\end{equation}
    and $G(s)=1/\prod_{i=3}^{n}(s-p_i)$. The transfer function (\ref{eqn:H'exm=2}) is externally positive according to Proposition~\ref{prop:ex_pos_n=2}, and the transfer function $G(s)$ is externally positive as it is a series connection of first-order externally positive systems, according to Proposition~\ref{prop:ex_pos_n=1}. Therefore, $H(s)=H'(s)G(s)$ is externally positive. As this result holds throughout the range (\ref{eqn:exm=2:rangeofp}), the closed-loop system is stable and externally positive even if the closed-loop poles are arbitrarily close to $\max_{z_i\in\mathbb{R}}\lbrace z_i \rbrace$. Therefore, the minimum decay rate can be realized.
    
\noindent Next, we conisder the case where the plant zeros are complex conjugates $z_1,z_2\in\mathbb{C}\backslash\mathbb{R}$. Since the plant does not have any real zeros in this case, the decay rate of the closed-loop system can be arbitrarily small (Corollary~\ref{cor:decaynozero}). However, it is not clear whether $\sigma(H)=-\infty$ can be realized using a \emph{finite} controller order. We use Corollary~\ref{cor:alfathm} to show this is not possible: Using Corollary~\ref{cor:alfathm} and following a similar process as in Example~\ref{ex:m=2,different_n} indicates that a decay rate of $\sigma(H)<-\alpha$ is possible if and only if
\begin{equation}\label{eqn:Argz,alpha,teta}
    \operatorname{Arg}(z_{1,2}+\alpha)>\frac{\pi}{2}-\arctan(\sqrt{n-2}):=\theta(n).
\end{equation} 
As $\alpha\to+\infty$, the left-hand side tends to zero in (\ref{eqn:Argz,alpha,teta}). Hence, the above inequality may only be satisfied when $n\to+\infty$. Therefore, a decay rate of $\sigma(H)=-\infty$ is only achievable by an \emph{infinite} controller order.  Condition~(\ref{eqn:Argz,alpha,teta}) is illustrated in Figure~\ref{fig:limitations}. 
\noindent 
\hfill$\square$
\end{example}

\begin{figure*}[t]
     \centering
     \begin{subfigure}[b]{0.32\textwidth}
        \centering
        \includegraphics[width=1\linewidth]{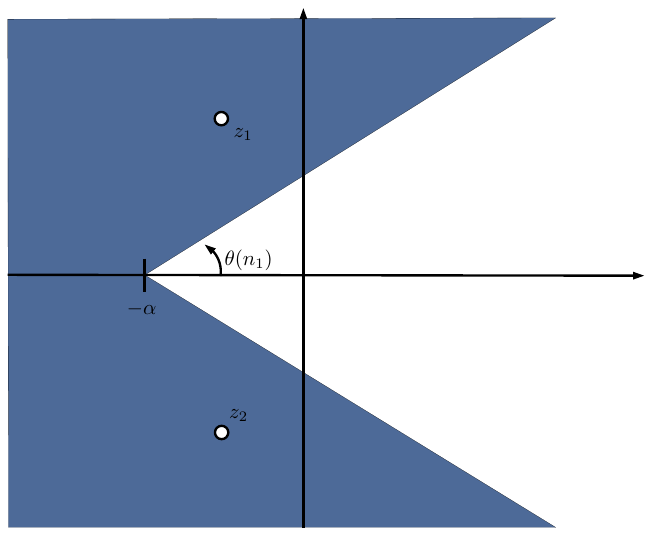}
	    \caption{The shaded region specifies the feasible locations for the plant zeros, the slopes of the straight lines are determined by the controller order, and the point where they intercept the real axis determines the fastest decay rate achievable for the closed-loop system while maintaining a monotonic step response.}
         \label{fig:lim1}
     \end{subfigure}
    \begin{subfigure}[b]{0.32\textwidth}
        \centering
        \includegraphics[width=1\linewidth]{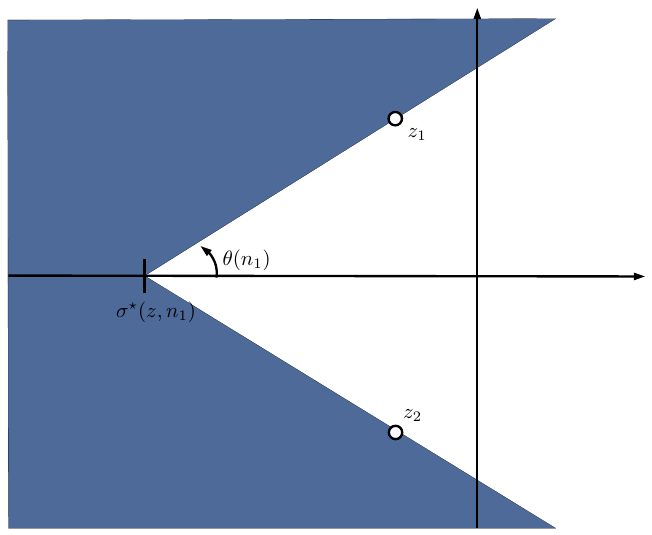}
	    \caption{With a fixed controller order (fixed slope), demanding a faster decay rate (increasing $\alpha$) shifts the shaded region horizontally to the left. In the extreme case where the straight lines cross the plant zeros, the point $-\alpha$ on the real axis specifies the fastest decay rate possible for that plant.}
         \label{fig:lim}
     \end{subfigure}
    \begin{subfigure}[b]{0.32\textwidth}
         \centering
        \includegraphics[width=1\linewidth]{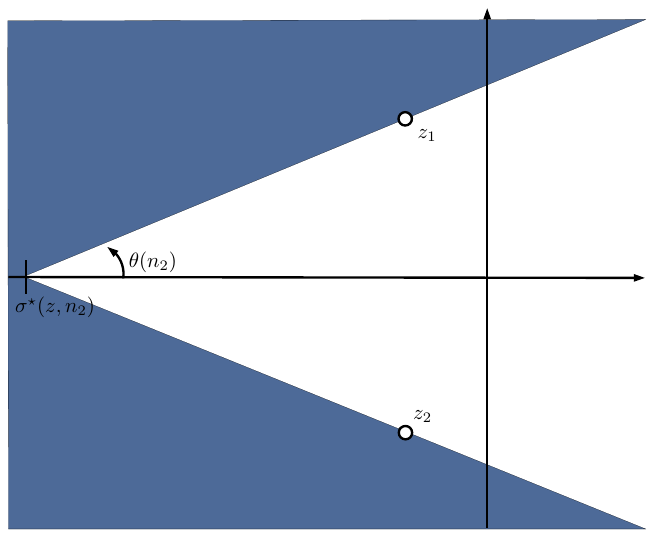}
	    \caption{Both the fastest decay rate $\sigma^{\star}(z,n)$ and the angle $\vert\theta(n)\vert$ are decreasing with $n$ (see also Figure~\ref{fig:cont_order}). Hence, by using $n=n_2$ where $n_2>n_1$, one can achieve a faster decay rate for the closed-loop system while maintaining a monotonic response. This is realized by increasing the controller order.} 
       \label{fig:lim2}
    \end{subfigure}
        \caption{
        The regions on the complex plane where $z_{1,2}$ satisfy the inequality $\operatorname{Arg}(z_{1,2}+\alpha)>\theta(n)$ in (\ref{eqn:Argz,alpha,teta}). Problem~\ref{prob:monotonic_stable} is feasible using a controller of order $n_c=n-n_{\rm o}$ and a closed-loop decay rate $\sigma(H)<-\alpha$, if and only if the plant zeros are located in these regions, where $n_{\rm o}$ is the order of the plant (fixed). For more information, see Example~\ref{ex:m=2decay}.
        }
        \label{fig:limitations}
\end{figure*}

As $\sigma^{\star}(z,n)$ is decreasing with $n$, the minimum decay rate over all controller orders offered in Corollary~\ref{cor:decaynozero} is typically achieved by high-order controllers and, in some cases, such as plants with one pair of complex-conjugate zeros (Example~\ref{ex:m=2decay}), only when $n\to+\infty$. However, when the controller order (and hence the number of closed-loop poles $n$) is \emph{fixed} at a certain value, it may not be possible for the closed-loop system to have the decay rate (\ref{eqn:sigstar>z}).

\subsubsection{Fastest decay rate with a given controller order}
The function $\sigma^{\star}(z,n)$ defined in (\ref{eqn:optsigma}) gives the fastest decay rate achievable for the closed-loop system with a fixed set of zeros $z$ and a fixed number of closed-loop poles $n$. In the special case $m=2$, when the plant has a pair of complex-conjugate zeros, the fastest decay rate $\sigma^{\star}(z,n)$ is given by the following relation
\begin{equation}\label{eqn:simpleformula}
    \sigma^{\star}(z,n)=\operatorname{Re}(z_{1,2})-
\vert\operatorname{Im}(z_{1,2})\vert\sqrt{n-2},
\end{equation}
which is obtained from a slight manipulation of equation (\ref{eqn:Argz,alpha,teta}). The above equation shows that achieving a faster closed-loop response becomes increasingly difficult when the plant zeros are closer to the real axis. This is also shown to be true when $m>2$ in Example~\ref{ex:decayrate}. In the general case $m\geq 0$, there are two possible ways to calculate $\sigma^{\star}(z,n)$.


The first approach relies on Corollary~\ref{cor:alfathm}: Start with $\alpha=0$. If monotonic tracking is feasible with the given controller order, the polynomial (\ref{eqn:Btild}) does not have real non-negative roots. In this case, the minimum decay rate can be found via a bisection search over $\alpha$. We increase $\alpha$ until the polynomial $\tilde{B'}$ in (\ref{eqn:B'}) has one or more non-negative roots. The maximum $\alpha=\alpha_{\rm max}$ found in this way, determines the fastest decay rate as follows
$$
\sigma^{\star}(z,n)=-\alpha_{\rm max}.
$$

An alternative way to compute $\sigma^{\star}(z,n)$ is by solving the optimization problem (\ref{eqn:optsigma}) directly. To simplify this task, we first note in the proof of Lemma~\ref{lem:reduction_to_real_equidistanced} that when Problem~\ref{prob:monotonic_stable} has a feasible solution in $p\in\mathbb{C}^{n}$ with decay rate $\sigma(H)$, it also has a feasible solution with real-equal poles $p_1=p_2=\dots=p_n=\sigma$ (\ref{eqn:p1=...=pn}) and the same decay rate. This means restricting the poles to be real and equal does not compromise the fastest decay rate achievable. Therefore, adding the constraint (\ref{eqn:p1=...=pn}) to the optimization problem (\ref{eqn:optsigma}) does not change its optimal value $\sigma^{\star}(z,n)$. However, it makes the optimization problem significantly simpler as it allows using Lemma~\ref{lem:nec&suff} to verify the external positivity constraint in (\ref{eqn:optsigma}). Therefore, to solve (\ref{eqn:optsigma}), one can simply use a bisection search over the parameter $\sigma\in(-\infty,0)$ and check the non-negativity of the polynomial $Q(\sigma,t)$ in (\ref{eqn:Q(x)}). Since all non-negative univariate polynomials are sums of squares (SOS)~\cite{hilbert}, the non-negativity of $Q(\sigma,t)$ can be easily posed as a semi-definite program (SDP). However, since we only require non-negativity in a semi-infinite range in Lemma~\ref{lem:nec&suff}, enforcing $Q(\sigma,t)$ to be SOS would be conservative. Nevertheless, the Markov-Lucaks' theorem helps to recover an SDP formulation with no conservatism, as shown in the following proposition.

\begin{proposition}[\cite{Lucaks,MDB2009}]\label{prop:Lucaks}
Polynomial $Q(\sigma,t)$ of degree $m$ is non-negative in the range $t\in[0,+\infty)$, if and only if there exist polynomials $y'(t)$ and $y''(t)$ of degrees at most $m/2$ and $(m-1)/2$ respectively, such that
\begin{equation}\label{eqn:Qq1q2}
    Q(\sigma,t-1)={y'}^2(t)+(t-1){y''}^2(t).
\end{equation}
\end{proposition}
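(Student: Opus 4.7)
The plan is to reduce the claim to the classical Markov--Lukács representation for polynomials non-negative on a half-line, and then prove that representation via a factorization argument combined with an algebraic identity. First, I would perform the change of variables $u = t-1$. Setting $q_1(u) := y'(u+1)$ and $q_2(u) := y''(u+1)$ (polynomials whose degrees agree with those of $y'$ and $y''$), the identity (\ref{eqn:Qq1q2}) becomes
\begin{equation*}
Q(\sigma, u) = q_1(u)^2 + u\, q_2(u)^2,
\end{equation*}
and the non-negativity range $t \in [0,+\infty)$ for $Q(\sigma,t)$ is preserved. Hence the proposition is equivalent to the classical statement: a polynomial of degree $m$ is non-negative on $[0,+\infty)$ if and only if it admits such a decomposition with $\deg q_1 \leq m/2$ and $\deg q_2 \leq (m-1)/2$.

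The ``if'' direction is immediate, since both summands are non-negative for $u \geq 0$. For the ``only if'' direction, I would factor $Q(\sigma, u)$ over $\mathbb{R}$. Non-negativity on $[0,+\infty)$ forces every real root in $[0,+\infty)$ to have even multiplicity, while real roots in $(-\infty, 0)$ may occur with arbitrary multiplicity, and complex roots appear in conjugate pairs. Up to a positive leading coefficient (absorbed into $q_1$), $Q(\sigma, u)$ is therefore a product of elementary factors of three types: $(u-a)^2$ with $a \geq 0$; $(u+b)$ with $b > 0$; and $(u-\alpha)^2 + \beta^2$ with $\beta > 0$.

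The key algebraic tool is the Brahmagupta-like identity
\begin{equation*}
(A^2 + u B^2)(C^2 + u D^2) = (AC + u\, BD)^2 + u\,(AD - BC)^2,
\end{equation*}
which shows that the class of polynomials of the form $q_1^2 + u\, q_2^2$ is closed under multiplication. Each elementary factor lies in this class: $(u-a)^2 = (u-a)^2 + u\cdot 0^2$; $u+b = (\sqrt{b})^2 + u\cdot 1^2$; and $(u-\alpha)^2 + \beta^2$ admits a representation with a linear $q_1$ and a constant $q_2$ by matching coefficients (explicitly, $q_1(u) = u - \sqrt{\alpha^2+\beta^2}$ and $q_2(u)^2 = 2(\sqrt{\alpha^2+\beta^2}-\alpha) \geq 0$). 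Repeated application of the identity then produces the required decomposition for $Q(\sigma, u)$. The step I expect to be most delicate is tracking degrees through the multiplications to recover the \emph{sharp} bounds $\deg q_1 \leq m/2$ and $\deg q_2 \leq (m-1)/2$, rather than only the loose bound $\deg q_i \leq m$; this is handled by induction on the number of elementary factors, verifying at each step that both $2\deg q_1$ and $1 + 2\deg q_2$ remain bounded by the current degree of $Q$.
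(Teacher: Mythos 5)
Your proof is correct, but it takes a genuinely different route from the paper. The paper disposes of this proposition in two lines by citing \cite{MDB2009} (Lemma~3 there), which certifies non-negativity of a polynomial on $[1,+\infty)$, and applying that result to the shifted polynomial $Q(\sigma,t-1)$; in other words, the paper treats the Markov--Luk\'acs representation as a known black box. You instead reprove the representation from first principles: after the substitution $u=t-1$ reduces the claim to the half-line form $Q=q_1^2+u\,q_2^2$ on $[0,+\infty)$, you factor $Q$ over $\mathbb{R}$, verify that each elementary factor lies in the cone $\{q_1^2+u\,q_2^2\}$, and use the composition identity $(A^2+uB^2)(C^2+uD^2)=(AC+uBD)^2+u(AD-BC)^2$ to close the cone under multiplication, with an induction to keep the degree bounds sharp (the degree bookkeeping does go through: in every parity case $\deg(AC+uBD)\le\lfloor (d_1+d_2)/2\rfloor$ and $\deg(AD-BC)\le\lfloor (d_1+d_2-1)/2\rfloor$). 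What your route buys is a self-contained and \emph{constructive} argument --- it produces explicit $y',y''$ from the roots of $Q$, which is in the spirit of the SDP use the paper makes of this proposition --- at the cost of length; the paper's citation buys brevity. One small slip to fix: non-negativity on $[0,+\infty)$ does \emph{not} force a root at the origin to have even multiplicity (e.g.\ $Q(\sigma,u)=u$ is admissible), so your list of elementary factors should allow $b=0$ in the factor $u+b=(\sqrt{b})^2+u\cdot 1^2$; with that, an odd-order root at $0$ contributes $u^{2k+1}=0^2+u\,(u^k)^2$ and the factorization argument is complete. (This edge case never arises in the paper's application, since there $Q(\sigma,0)=\binom{n-1}{m}m!\,K>0$.)
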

\begin{proof}
By using \cite[Lemma~3]{MDB2009}, one can verify the non-negativity of a polynomial in the range $[1,+\infty)$. Applying this result to the shifted polynomial (\ref{eqn:Qq1q2}) is equivalent to the condition $Q(\sigma,t)\geq 0$ for $t\in(0,+\infty)$.
\end{proof}
 
Condition (\ref{eqn:Qq1q2}) can be readily posed as an SDP by equating the corresponding coefficients of the polynomials appearing on the both sides of the equation (\ref{eqn:Qq1q2}), as shown in \cite{MDB2009}.

\begin{example}\label{ex:decayrate}
Consider the following sixth-order transfer function
\begin{figure*}
\begin{equation}\label{eqn:sysEx2}
    H_{\rm o}=\frac{0.094 s^3 +20 s^2 +2.4\times 10^3 s +3.5 \times 10^5}{1.2 \times 10^{-3}s^6 +2.8 s^5 +2\times 10^3 s^4 +3.9 \times 10^{5} s^3 +8.7 \times 10^7 s^2 +6.4 \times 10^9 s +6.4 \times 10^{11}},
\end{equation}
\end{figure*}
which relates the sensed force to the actuator force in a PUMA 560 manipulator~\cite{puma}. This system has its zeros approximately at
$z_{1,2,3}=-187,-16 \pm 141i$. Monotonic tracking using the controller structure (\ref{eqn:two-par-restricted}) is feasible for the plant (\ref{eqn:sysEx2}), because it does not have any non-negative zeros (Corollary~\ref{cor:all_feasible}). However, because of the zero on the real axis, the fastest decay rate achievable is bounded by (\ref{eqn:sigstar>z}). We choose $n_c=n_{\rm o}-1=5$ to comply with Assumption~\ref{ass:controller} which gives $n=n_c+n_{\rm o}=11$. Then we follow the SDP method descried above to obtain $$\sigma^{\star}(z,11)=-187,$$
which means the fastest decay rate (\ref{eqn:sigstar>z}) over all controller orders is already realizable with $n_c=5$. To also investigate the effect of the complex-conjugate zeros on the decay rate, we changed a few mechanical parameters of the manipulator in \cite{puma} to shift the plant zeros to $z_{1,2,3}=-187,-16 \pm 40i$ in which only the complex-conjugate zeros have changed. By using the same number of closed-loop poles $n=11$ and the SDP method described in Section~\ref{subsec:decayrate}, the fastest decay rate was calculated to be much larger, \emph{i.e.} $\sigma^{\star}(z,n)=-132$ in this case. This indicates that shifting the complex-conjugate zeros closer to the real axis can limit the control system's ability to achieve a fast response.
\noindent 
\hfill$\square$
\end{example}

\section{Conclusion}\label{sec:conclusion}
We revisited the classical problem of designing linear output-feedback controllers for tracking reference set points monotonically, without overshoots or undershoots. We showed that whenever this problem is feasible, it can be solved using the control structure shown in Figure~\ref{fig:control_scheme}. 
We obtained the necessary and sufficient conditions for the feasibility of monotonic tracking with a decay rate that is faster than a given value (Corollary~\ref{cor:alfathm}). These conditions are based on the plant zeros, the number of plant poles, and the controller order, as the only decisive parameters. We obtained a method to compute the minimum controller order and the fastest closed-loop decay rate achievable in a monotonic tracking system based on these results.

Our results show that monotonic tracking is possible for a plant if and only if it does not have \emph{positive} zeros (Corollary~\ref{cor:all_feasible}). Furthermore, when monotonic tracking is feasible, the closed-loop system can be designed with an arbitrarily fast decay rate, if and only if the plant does not have \emph{negative} zeros (Corollary~\ref{cor:decaynozero}). Roughly speaking, monotonic tracking is more difficult when the plant zeros have larger real parts and smaller absolute imaginary parts, \emph{i.e.}, closer to the positive real axis. In this case, one may settle for slower closed-loop system responses and/or higher controller orders to achieve monotonic tracking. This fact is demonstrated algebraically in (\ref{eqn:simpleformula}) and geometrically in Figures~\ref{fig:cont_order} and \ref{fig:limitations} for plants with only one pair of complex-conjugate zeros.

\section*{References}

\bibliography{main.bib}
\bibliographystyle{ieeetr}

\appendix

\subsection{Proof of Proposition~\ref{prop:PIRnecK>0}}\label{sec:Proof_of_prop:PIRnecK>0}
There are two possible cases:\\
\noindent\textit{Case $m=n$:} the closed-loop impulse response $h(t)=\mathcal{L}^{-1}\lbrace H(s)\rbrace$ satisfies
\begin{equation}\label{eqn:propnecproof1}
    h(t)=K\delta(t)+\mathcal{L}^{-1}\lbrace H(s)-K\rbrace
\end{equation}
where $\mathcal{L}^{-1}\lbrace H(s)-K\rbrace$ is bounded in a neighbourhood of $t=0$ and $\delta(t)$ is the Dirac delta function. If $H(s)$ is externally positive, $h(t)$ is non-negative, and therefore, inequality (\ref{eqn:Kcl>0}) holds due to (\ref{eqn:propnecproof1}).\\
\noindent\textit{Case $n>m$:} we have
\begin{equation}\label{eqn:K>0h(0)>0}
\left\lbrace
\begin{array}{ll}
{d^{k-1} }h(t)/{ds^{k-1}}|_{t=0^+}=0, &  k=1,2,\dots,n-m-1 \\
{d^{k-1} }h(t)/{ds^{k-1}}|_{t=0^+}=K, &  k=n-m
\end{array}
\right.
\end{equation}
according to the initial value theorem. Again, in this case, inequality (\ref{eqn:Kcl>0}) is necessary for the non-negativity of $h(t)$.
\noindent 
\hfill$\square$

\subsection{Proof of Lemma~\ref{lem:nec&suff}}\label{sec:Proof_of_lem:nec&suff}
It is deduced from (\ref{eqn:K>0h(0)>0}) that $h(t)=\mathcal{L}^{-1}\lbrace H(s)\rbrace$ is non-negative in a neighbourhood of $t=0$, if and only if $K> 0$. Let $t\neq 0$. The inverse Laplace transform of the transfer function $H(s)=B(s)/(s-\sigma)^{n}$ satisfies \cite[p.231]{bateman1954tables}
\begin{align*}
    &h(t) \exp(-\sigma t)=\\
    &b_1+\left(b_2+\binom{n-1}{1}b_1\sigma\right)t\\
    &+\left(b_3+\binom{n-2}{1}b_2\sigma+\binom{n-1}{2}b_1\sigma^2\right)t^2/2!\\
    &+\dots+\left(b_{n}+b_{n-1}\sigma+\dots+b_1\sigma^{n-1}\right)t^{n-1}/(n-1)!
\end{align*}
which, after a little manipulation, can be written as
$$
h(t)=
\frac{\exp(\sigma t)}{(n-1)!}\sum_{j=0}^{n-1}\binom{n-1}{j}B^{(j)}(\sigma)t^{n-1-j}
$$
as the numerator polynomial $B(s)$ is of order $m$, $B^{(j)}(s)\equiv 0$ holds for $j>m$. Therefore, we arrive at
$$
h(t)=
\exp(\sigma t)\frac{t^{n-1-m}}{(n-1)!}Q(\sigma,t)
$$
which proves $h(t)\geq 0$ holds if and only if $Q(\sigma,t)\geq 0$ for all $t\in(0,+\infty)$.
\noindent 
\hfill$\square$

\subsection{Proof of Lemma~\ref{lem:moving_forward}}\label{sec:Proof_of_lem:moving_forward}
Transfer function (\ref{eqn:H'_1}) can be written as $H'(s)=H(s)G(s)$ where
$$
G(s)=\frac{\prod_{i=1}^{n} (s-p_i)}{\prod_{i=1}^{n} (s-p_i-\delta_i)}
$$
is a series connection of the first order systems $(s-p_i)/(s-p_i-\delta_i)$ that are all externally positive according to Proposition~\ref{prop:ex_pos_n=1}. Since $H(s)$ is also externally positive by assumption, its series connection with $G(s)$, \emph{i.e.} the transfer function $H'(s)=H(s)G(s)$ is externally positive.
\noindent 
\hfill$\square$

\subsection{Proof of Lemma~\ref{lem:reduction_to_real_equidistanced}}\label{sec:Proof_of_lem:reduction_to_real_equidistanced}
\textit{if part:} Obvious.

\textit{only if part:}
If Problem~\ref{prob:monotonic_stable} is feasible, then there are $n$ poles $p'_i$ ($i=1,2,\dots,n$) in the open left half plane such that
\begin{equation}\label{eqn:H'}
H'(s)={B(s)}/{\prod_{i=1}^{n} (s-p'_i)}
\end{equation}
is externally positive. Without loss of generality, we assume that the first $2k$ of these poles are complex-conjugates, \emph{i.e.}
$$
p'_{2i-1}=\bar{p}'_{2i},\quad i=1,2,\dots,k
$$
Now define a new set of poles $p''\in (-\infty,0)^{n}$ as
\begin{equation}\label{eqn:p''}
    p''_i=\operatorname{Re}(p'_i), \quad i=1,2,\dots,n
\end{equation}
Replacing the closed-loop system poles $p'$ by $p''$ gives the transfer function $H''(s)=H'(s)G(s)$ where
$$
G(s)=\prod_{i=1}^k\frac{(s-p'_{2i-1})(s-p'_{2i})}{(s-p''_{2i-1})(s-p''_{2i})}
$$
is a series connection of second-order systems that are externally positive according to Proposition~\ref{prop:ex_pos_n=2}. Therefore, $G(s)$ is externally positive, which proves $H''(s)$ is externally positive. Now, define a new set of poles $p\in(-\infty,0)^{n}$ as
$$
p_i=p''_i+\delta_i
$$
where $\delta_i=\max_j\lbrace p''_j \rbrace-p''_i\geq 0$. According to Lemma~\ref{lem:moving_forward}, the transfer function (\ref{eqn:transfer_function}) is externally positive with poles $p_i$ all equal to $\sigma=\max_j\lbrace p''_j \rbrace$. Hence, when Problem~\ref{prob:monotonic_stable} is feasible, it can always be solved by a set of real-equal poles.
\noindent 
\hfill$\square$


\subsection{Proof of Corollary~\ref{cor:all_feasible}}\label{sec:proof_of_cor:all_feasible}
Consider factorizing the closed-loop system (\ref{eqn:transfer_function}) based on its minimum-phase and non-minimum-phase subsystems as $H(s)=KH_{\rm Mp}(s)H_{\rm Np}(s)$, where
\begin{align*}
H_{\rm Mp}(s)&=\frac{\prod_{i=1}^{m_{\rm Mp}}(s-z_{i})}{A_0(s)},\\
H_{\rm Np}(s)&=\prod_{i=1}^{m_{\rm Np}/2}\frac{(s-z_{m_{\rm Mp}+2i-1})(s-z_{m_{\rm Mp}+2i})}{A_{i}(s)}
\end{align*}
where it is assumed that the first $m_{\rm Mp}$ zeros are minimum-phase and
\begin{equation}\label{eqn:complex_m1}
    z_{m_{\rm Mp}+2i-1}=\bar{z}_{m_{\rm Mp}+2i},\quad i=1,2,\dots,m_{\rm Np}/2
\end{equation}
are the non-minimum-phase complex-conjugate zeros. Since $z_1,z_2,\dots,z_{m_{\rm Mp}}$ are all located in the left half-plane, there is a stable polynomial $A_0(s)$ that makes $H_{\rm Mp}(s)$ externally positive and the order of $A_0(s)$ can be as low as $n_0=m_{\rm Mp}$ (Example~\ref{cor:b>0}). There is a stable polynomial $A_i(s)$ of order $n_i$ in each factor of $H_{\rm Np}(s)$ that makes that factor externally positive where $\sqrt{n_i-2}>\operatorname{Re}(z_{m_{\rm Mp}+2i})/\operatorname{Im}(z_{m_{\rm Mp}+2i})$ according to (\ref{eqn:Argz>pi/2-..}). Since the series connection of externally positive systems is itself externally positive, we conclude that there is a stable polynomial
$$A(s)=A_0(s)\prod_{i=1}^{m_{\rm Np}/2} A_i(s)$$ of order $n=n_0+\sum_{i=1}^{m_{\rm Np}/2} n_i$ which makes the closed-loop system (\ref{eqn:transfer_function}) externally positive. Therefore, Problem~\ref{prob:monotonic_stable} is feasible with (\ref{eqn:n^star<all}).
\noindent 
\hfill$\square$


\subsection{Proof of Corollary~\ref{cor:alfathm}}\label{sec:Proof_of_cor:alfathm}
First, we prove that Problem~\ref{prob:monotonic_stable} is feasible with decay rate $\sigma(H)<-\alpha$ if and only if it is feasible with the shifted zeros $z_i+\alpha$ where $i=1,2,\dots,m$. Then we use this result and Theorem~\ref{thm:feasibility} to prove this corollary. 

\textit{if part:} Let Problem~\ref{prob:monotonic_stable} be feasible with the shifted zeros $z_i+\alpha$ where $i=1,2,\dots,m$. In this case, there are $n$ closed-loop poles $p_i$ with $\operatorname{Re}(p_i)<0$ such that the transfer function
$$
H'(s)=\frac{\prod_{i=1}^m (s-z_i-\alpha)}{\prod_{i=1}^n (s-p_i)}
$$
is externally positive. Let $h'(t)$ be the impulse response of $H'(s)$. Then the impulse response of the transfer function $H''(s)=H'(s+\alpha)$ is given by $h'(t)\exp(-\alpha t)$. Therefore, $H'(s)$ is externally positive if and only if
$$
H''(s)=H'(s+\alpha)=\frac{\prod_{i=1}^m (s-z_i)}{\prod_{i=1}^n (s-p_i+\alpha)}
$$
is externally positive. Hence, there are $n$ closed-loop poles $p_i$ with $\operatorname{Re}(p_i)<0$ such that $H''(s)$ is externally positive. This condition is equivalent to the existence of $n$ closed-loop poles $p''_i=p_i-\alpha$ with $\operatorname{Re}(p''_i)<-\alpha$ such that the following transfer function is externally positive
$$
H(s)=K\frac{\prod_{i=1}^m (s-z_i)}{\prod_{i=1}^n (s-p''_i)}.
$$

\textit{only if part:} Obvious, as all the above steps are reversible.

Therefore, if we first shift the plant zeros $z_i$ as $z_i+\alpha$ and then apply Theorem~\ref{thm:feasibility}, we obtain the necessary and sufficient conditions of feasibility in Problem~\ref{prob:monotonic_stable} with the decay rate $\sigma(H)<-\alpha$.

\noindent 
\hfill$\square$

\subsection{Proof of Corollary~\ref{cor:decaynozero}}\label{sec:Proof_of_cor:decaynozero}
In the proof of Corollary~\ref{cor:alfathm}, we showed that Problem~\ref{prob:monotonic_stable} has a solution with the closed-loop decay rate $\sigma(H)<-\alpha$ if and only if Problem~\ref{prob:monotonic_stable} is feasible with the shifted zeros 
\begin{equation}\label{shiftz}
z_i+\alpha,\quad i=1,2,\dots,m.
\end{equation}
We use the above result to prove this corollary by considering two separate cases.

\noindent\textit{Plants with real zeros:}
According to Corollary~\ref{cor:all_feasible}, when the plant has real negative zeros, Problem~\ref{prob:monotonic_stable} is feasible with the shifted zeros (\ref{shiftz}) if and only if $z_i+\alpha<0$ holds for all $i\in 1,2,\dots,m$ such that $z_i\in\mathbb{R}$. This condition is equivalent to $\alpha<-\max_{z_i\in\mathbb{R}}\lbrace z_i \rbrace$. Therefore, the fastest decay rate is given by (\ref{eqn:sigstar>z}).

\noindent\textit{Plants without real zeros:}
Let $m_1$ and $m_2$ be the number of plant zeros in the regions $\lbrace z\in\mathbb{C} \vert \operatorname{Re}(z)<-\alpha\rbrace$ and $\lbrace z\in\mathbb{C} \vert \operatorname{Re}(z)\geq -\alpha, \operatorname{Im}(z)\neq 0\rbrace$ respectively. According to Corollary~\ref{cor:all_feasible}, when the zeros are shifted as (\ref{shiftz}), Problem~\ref{prob:monotonic_stable} is feasible  for all $\alpha\in \left[0,\min_{z_i\in\mathbb{R}}\lbrace -z_i\rbrace \right)$ and all $n$ satisfying
\begin{equation}\label{eqn:n^star<all_shifted}
    n\geq 1+m_1+m_2+\sum_{j=1}^{m_2/2}\left\lfloor \left(\frac{\operatorname{Re}(z_j)+\alpha}{\operatorname{Im}(z_j)}\right)^2\right\rfloor
\end{equation}
Hence, the decay rate $\sigma(H)<-\alpha$ is achieved in Problem~\ref{prob:monotonic_stable} by $n$ suitable closed-loop poles $p_i$ ($i=1,2,\dots,n$) in the open left-half plane, where $n$ satisfies (\ref{eqn:n^star<all_shifted}). As $\alpha\to+\infty$, the bound on the right-hand side of (\ref{eqn:n^star<all_shifted}) tends to infinity. Therefore, an arbitrarily fast decay rate is possible if $n$ is allowed to grow arbitrarily large, \emph{i.e.}, $\lim_{n\to +\infty}\sigma^{\star}(z,n)=-\infty$.

\noindent 
\hfill$\square$



\end{document}